\newtheorem{theorem}{Theorem}
\newtheorem{lemma}[theorem]{Lemma}
\newtheorem{proposition}[theorem]{Proposition}
\newtheorem{corollary}[theorem]{Corollary}
\theoremstyle{definition}
\newtheorem{definition}[theorem]{Definition}
\let\mgp=\marginpar \marginparwidth18mm \marginparsep1mm
\def\marginpar#1{\mgp{\raggedright\tiny #1}}
\let\lbl=\label
\def\label#1{\lbl{#1}\ifinner\else\marginpar{\ref{#1} #1}\ignorespaces\fi}
\begin{document}
\title[]{A de Finetti-style Result for Polygons Drawn from the Symmetric Measure}
\author{Michael Berglund}
\altaffiliation{Piedmont College, Mathematics Department, Demorest GA}
\noaffiliation

\begin{abstract} 
There is a natural intuition that, given a large $n$, the distributions of small segments of a randomly sampled polygonal chain and those of a randomly sampled closed polygonal chain (drawn from the subspace measure of course), should be very similar. We show that this is the case for the symmetric measure on polygon spaces, and provide explicit bounds on the total variation between these two distributions.\end{abstract}
\date{\today}
\maketitle

\section{Introduction} 
\label{sec:introduction}
Let us begin by defining the symmetric measure for polygons, introduced in \cite{Can1}. This measure is defined as a pushforward, so we will first define the spaces and maps involved. Set $\operatorname{Arm}_d(n)$ to be the moduli space of $n$-edged polygonal chains in $\mathbb{R}^d$ up to translations and dilations, and set $\text{Pol}_d(n)$ to be the subspace consisting of the closed polygons.

Next, set $S:\mathbb{C}^n\rightarrow\mathbb{C}^n$ to be the map which squares each coordinate, and set $H:\mathbb{H}^n\rightarrow\mathbb{H}^n$ to be the map from the $n$-dimensional module over the division ring of quaternions which applies the Hopf map ($\mathbf{q}\mapsto \mathbf{\overline{q}iq}$) to each coordinate. It is important to note that the Hopf map sends any quaternion to a purely imaginary quaternion, in such as way that it produces the fiber bundle $S^1\rightarrow \mathbb{H} \xrightarrow{\operatorname{Hopf}} \mathbb{R}^3$. 

We then note that by considering each polygonal chain as an ordered list of edge vectors, $\operatorname{Arm}_2(n)$ can be identified with the subspace $\{\vec{z}\in\mathbb{C}^n:\sum_{i=1}^n \vert z_i \vert = 2\}$, and that $\operatorname{Arm}_3(n)$ can be identified with the subspace $\{\vec{x}\in(\mathbb{R}^3)^n:\sum_{i=1}^n \| x_i \| = 2\}$\footnote{The total perimeter could be any fixed positive number, but we choose 2 for convenience.}.

In this way, we can view $S$ as a map from $S^{2n-1}(\sqrt 2)\rightarrow \operatorname{Arm}_2(n)$ and $H$ as a map from $S^{4n-1}(\sqrt{2})\rightarrow \operatorname{Arm}_3(n)$. Finally, the symmetric measure on $\operatorname{Arm}_d(n)$ is the pushforward of the Haar measure on the appropriate sphere.

Under the embedding of $\{\vec{a},\vec{b}\}\mapsto \vec{a}+\mathbf{i}\vec{b}$, it is not hard to show that the Stiefel manifold $V_2(\mathbb{R}^n)$ is precisely the preimage of $\operatorname{Pol}_2(n)$. Likewise, under the embedding of $\{\vec{a},\vec{b}\}\mapsto \vec{a}+\mathbf{j}\vec{b}$, we see that $V_2(\mathbb{C}^n)$ is the preimage of $\operatorname{Pol}_3(n)$. Finally, observe that under these embeddings the subspace measure agrees with the pushforward of the Haar measure. Finally, we explicitly mention the following fact: the Haar measure on $S^k$ is invariant under the permutations of the coordinates, and so we find that the symmetric measure for polygons is invariant under permutations of the edge vectors. More in depth detail of this construction can be found in \cite{Can1}. 

\section{The Planar Case}
\label{sec:total-curvature}
We may sample an $n$-edge polygon from the symmetric measure on $\operatorname{Pol}_2(n)$ by applying 
the map $S$ to a 2-frame sampled from the Haar measure on $V_2(\mathbb{R}^n)$. This 2-frame may in turn 
may be obtained by sampling a matrix from the Haar measure on $O(n)$ and taking the first two columns. Likewise, 
we may sample an $n$-edge arm from the symmetric measure on $\operatorname{Arm}_2(n)$ by applying $S$ to a point sampled 
from the spherical measure on $S^{2n-1}(\sqrt{2})$. Since these utilize the same map, if we are interested in the distribution of the first few edges of polygons sampled from the symmetric measure, we need only focus on the distributions of the first few coordinates of 
$\{\vec{v}=(x_1,y_1,x_2,y_2,\dots,x_n,y_n)\in\mathbb{R}^{2n}:\|v\|^2=2\}$ under the embeddings of 
$V_2(\mathbb{R}^n)$ and $S^{2n-1}(\sqrt{2})$ into $\mathbb{R}^{2n}$.

\begin{theorem}[From \cite{Diaconis01}]\label{ortho-df} Suppose that $Z$ is the $r\times s$ upper block of a random matrix $U$ 
which is uniform on $O(n)$, implying that it has mean 0 and covariance of $\frac{1}{n} I_r \otimes I_s$. Let 
$X$ be an $rs$ multivariate normal distribution with the same mean and covariance. Then, provided that 

\noindent
$r+s+2<n$, the total variation distance between the law of $Z$ and the law of $X$ is bounded by 
$B(r,s;n)=2\left( \left(1-\frac{r+s+2}{n}\right)^{-c} -1 \right)$, where $c=\frac{t^2}{2}$ and $t=$min$(r,s)$.
\end{theorem}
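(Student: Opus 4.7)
The plan is to work directly with the explicit density of the top-left $r\times s$ block of a Haar-random orthogonal matrix and compare it pointwise to the Gaussian density with matching mean and covariance. For $r+s \le n$, it is a classical calculation that $Z$ admits a Lebesgue density on $\mathbb{R}^{r\times s}$ proportional to $\det(I_s - Z^T Z)^{(n-r-s-1)/2}$ on the compact set $\{I_s - Z^T Z \succeq 0\}$, while the Gaussian comparison $X$ has density proportional to $\exp\!\bigl(-\tfrac{n}{2}\mathrm{tr}(Z^T Z)\bigr)$ on all of $\mathbb{R}^{r\times s}$. The explicit ratio of these two densities is what will drive everything.

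First I would normalize both densities and form the ratio $f_Z/g$. Using $\log \det(I - A) = -\mathrm{tr}(A) - \tfrac{1}{2}\mathrm{tr}(A^2) - \cdots$ applied to $A = Z^T Z$, the leading exponent in $f_Z$ matches the Gaussian to first order, and the ratio reduces to an exponential in $\mathrm{tr}((Z^TZ)^k)$ for $k \ge 2$ modulated by a factor coming from the normalizing constants and from the discrepancy between the exponents $(n-r-s-1)/2$ and $n/2$. Second, I would convert the pointwise bound into a total variation bound using
\[
\mathrm{TV}(Z,X) \;\le\; \tfrac12\, \mathbb{E}_X\!\left[\left| \tfrac{f_Z}{g} - 1 \right|\right] + \tfrac12\, P_X\bigl(X \notin \mathrm{supp}\,Z\bigr),
\]
which splits the task into (i) a Gaussian tail estimate for $\{X^T X \not\preceq I_s\}$ and (ii) a moment-style estimate on the ratio inside the support. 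Each piece can be handled by Wick-type expectations of traces of powers of $X^T X$ under the Gaussian, and both pieces shrink when $(r+s)/n$ is small.

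The main obstacle will be producing the precise exponent $c = t^2/2$ with $t = \min(r,s)$. Since $Z$ has rank at most $\min(r,s)$, the determinant $\det(I_s - Z^T Z)$ is really a product $\prod_{i=1}^{t}(1-\sigma_i^2)$ over the $t$ nontrivial squared singular values of $Z$, and switching to an SVD parametrization naturally produces a power of $t$. Obtaining the \emph{square} $t^2$ presumably requires pairing each of the $t$ singular-value contributions with the full $t$-dimensional effective span of $Z$'s row or column space, the kind of detailed bookkeeping that is usually handled by a beta-function calculation: the squared singular values of $Z$ have a joint density of Jacobi ensemble type, and integrating out to match $(1-(r+s+2)/n)^{-t^2/2}$ is exactly the sort of computation one would lean on \cite{Diaconis01} to supply rather than reproduce here.
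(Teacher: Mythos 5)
This theorem is not proved in the paper at all: it is quoted verbatim as a result of Diaconis--Eaton--Lauritzen \cite{Diaconis01}. What the paper \emph{does} prove is the unitary analogue, Theorem~\ref{my-df} in the Appendix, following the same template as \cite{Diaconis01}. So the appropriate comparison is between your strategy and that template.

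Your starting point is correct --- the block $Z$ has density proportional to $\det(I_s - Z^\intercal Z)^{(n-r-s-1)/2}$, and the Gaussian comparison has density proportional to $\exp(-\tfrac{n}{2}\operatorname{tr}(Z^\intercal Z))$ --- but the proof strategy then diverges from the one that actually delivers the stated bound. The Diaconis--Eaton--Lauritzen argument (mirrored in the paper's proof of Theorem~\ref{my-df}) first observes that both densities depend only on $Z^\intercal Z$ resp.\ $X^\intercal X$, reduces the total variation distance to that of the induced distributions on the squared block (a matrix Beta versus a Wishart), and then uses the $L^\infty$ estimate
\[
\int |f - g| \;=\; 2\int_E \Bigl(\tfrac{g}{f}-1\Bigr)f \;\le\; 2\sup_{v\in E}\Bigl(\tfrac{g(v)}{f(v)}-1\Bigr),
\]
maximizes the density ratio by calculus (the critical point is a scalar multiple of the identity), and shows the maximum is bounded by $\bigl(1-\tfrac{r+s+2}{n}\bigr)^{-t^2/2}$ via a convexity argument on $-\ln(1-x)$. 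By contrast you propose an $L^1$ / moment-expansion bound, $\tfrac12 E_X|f_Z/g - 1|$ plus a tail term, estimated by Wick-type trace calculations. That is a genuinely different tool, and it is not the one that produces the sharp closed form here: the supremum-of-ratio bound collapses everything into a single clean product over $j=1,\dots,t$, whereas a trace-power expansion gives a series in moments of $X^\intercal X$ that does not obviously re-sum to $(1-(r+s+2)/n)^{-t^2/2}$. (A minor technical issue: your splitting double-counts the tail --- if $f_Z$ is taken to be zero off its support, $E_X|f_Z/g-1|$ already includes the tail mass $P_X(X\notin\operatorname{supp}Z)$.)

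More importantly, you explicitly punt the hard step --- producing the exponent $t^2/2$ --- back to \cite{Diaconis01}. Since the whole content of the theorem is that exponent, deferring it to the cited paper means your proposal is not a proof but a gesture at one. To actually derive $t^2/2$ you would need to reduce to the $Z^\intercal Z$ distribution, compute and maximize the density ratio, and carry through the telescoping/convexity bound as the Appendix does for the complex case; the Jacobi-ensemble/SVD observation is correct as intuition but is not by itself a substitute for that computation.
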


Here, $A \otimes B$ is the Kronecker product of $A$ and $B$, given by:

\begin{definition}Where $A=(a_{i,j})$ is an $m\times n$ matrix and $B=(b_{i,j})$ is a $p\times q$ matrix 
we define the Kronecker product $A\otimes B$ to be the $mp\times nq$ matrix, given in block form as 

\begin{center}
$\begin{bmatrix}
a_{1,1}B & \dots & a_{1,n}B \\
\vdots & \ddots & \vdots \\
a_{m,1}B & \dots & a_{m,n}B\end{bmatrix} $

\end{center}
\end{definition}

\begin{theorem}[From \cite{Diaconis02}.]\label{sphere-df} Let $Q_{n,r,k}$ be the law of $\left(\xi_1,\dots,\xi_k\right)$ 
when $\left(\xi_1,\dots,\xi_k,\xi_{k+1},\dots,\xi_n\right)$ is uniformly distributed over the surface of the sphere 
$\displaystyle\left\{\xi:\sum_{i=1}^n \xi_i^2 = r^2\right\}$. Let $P_\sigma^k$ be the law of $\sigma \zeta_1,\dots,\sigma\zeta_k$ 
where the $\zeta$ are independent standard normals. Then the total variation distance between $Q_{n,r,k}$ and $P_{r/\sqrt{n}}^k$ 
is bounded by $2\frac{k+3}{n-k-3}$, for $1\leq k\leq n-4$. 
\end{theorem}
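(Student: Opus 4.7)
The plan is to reduce the multivariate total variation statement to a one-dimensional one and then compare two explicit densities. Both $Q_{n,r,k}$ and $P^k_{r/\sqrt n}$ are invariant under the orthogonal group $O(k)$ acting on $\mathbb{R}^k$: the Gaussian $P^k_{r/\sqrt n}$ by rotational symmetry of its i.i.d.\ product structure, and $Q_{n,r,k}$ because the Haar measure on $S^{n-1}(r)$ is invariant under block-diagonal rotations $\mathrm{diag}(O,I_{n-k})$ with $O\in O(k)$. Writing both distributions in polar form $L_R\otimes L_\Theta$ with $L_\Theta$ the shared uniform law on $S^{k-1}$, the total variation reduces to that between the radial laws $L^Q_R$ and $L^P_R$. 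Under $Q_{n,r,k}$, integrating out the trailing $n-k$ coordinates of the sphere measure yields $\|\xi\|^2/r^2 \sim \mathrm{Beta}(k/2,(n-k)/2)$, while under $P^k_{r/\sqrt n}$ we have $\|X\|^2/r^2 \sim \chi^2_k/n$.

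With that reduction, I would bound the $L^1$ distance between the densities
\[
f_B(u)=\frac{\Gamma(n/2)}{\Gamma(k/2)\Gamma((n-k)/2)}u^{k/2-1}(1-u)^{(n-k)/2-1},\qquad f_C(u)=\frac{(n/2)^{k/2}}{\Gamma(k/2)}u^{k/2-1}e^{-nu/2},
\]
directly. The shared factor $u^{k/2-1}$ cancels in the ratio, leaving
\[
\frac{f_B(u)}{f_C(u)}=\frac{\Gamma(n/2)}{\Gamma((n-k)/2)(n/2)^{k/2}}(1-u)^{(n-k)/2-1}e^{nu/2}
\]
on $(0,1)$. A Stirling-type estimate bounds the $\Gamma$-prefactor, and $\log(1-u)=-u-u^2/2-\cdots$ lets me nearly cancel the exponential against the power of $1-u$, leaving a controlled deviation. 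Integrating $|f_B-f_C|$ then uses the low moments of $\chi^2_k/n$, namely $E[u]=k/n$ and $E[u^2]=k(k+2)/n^2$, to produce a bound of the general shape $k/(n-k)$, which matches the form of the claim.

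The main obstacle will be extracting the sharp numerical constants $2(k+3)$ and $n-k-3$ rather than a looser asymptotic bound. Rather than slogging through explicit Stirling remainders, I expect the cleanest route is to use the Beta-function identity $\Gamma(a)\Gamma(b)/\Gamma(a+b)=\int_0^1 t^{a-1}(1-t)^{b-1}dt$ together with integration by parts (or the shift relation $\Gamma(z+1)=z\Gamma(z)$) to surface the integer shifts $k+3$ and $n-k-3$ naturally. I would also need a separate $\chi^2$-tail estimate to handle the region $u>1$, where $f_B$ vanishes but $f_C$ does not; this contribution decays exponentially in $n-k$ and should be absorbed easily into the main bound.
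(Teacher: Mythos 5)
This theorem is imported into the paper verbatim from Diaconis and Freedman with no proof supplied, so the only thing to compare against is the argument in that reference. Your reduction step is correct and matches the mechanism used there: both $Q_{n,r,k}$ and $P^k_{r/\sqrt n}$ are $O(k)$-invariant, so the total variation between them equals the total variation between the laws of $|\xi|^2/r^2$ and $|X|^2/r^2$, namely a $\mathrm{Beta}(k/2,(n-k)/2)$ against $\chi^2_k/n$; Diaconis and Freedman phrase the same collapse by noting that the $k$-dimensional density ratio $q/p$ depends only on $|x|^2$.

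The gap is that you have not actually carried out the estimate that turns this setup into the stated bound. Bounding the $L^1$ distance between the Beta and scaled chi-squared densities is where all the work lies, and the precise form $2(k+3)/(n-k-3)$ does not fall out of Stirling plus a naive Taylor expansion. After expanding $\log(1-u)$ and combining with $nu/2$, one has $\log(f_B/f_C) = \log C_{n,k} + \bigl(\tfrac{k}{2}+1\bigr)u - \bigl(\tfrac{n-k}{2}-1\bigr)\tfrac{u^2}{2} - \cdots$, and the constant $\log C_{n,k}$ must itself be estimated so as to cancel the first-order terms in expectation under $\chi^2_k/n$ before the remaining bookkeeping can reorganize into the claimed constants; this requires sharp moment inequalities and a correct bound of the form $|e^a-1|\le |a|e^{|a|}$, not merely a heuristic near-cancellation. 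Your suggestion that the shifts $k+3$ and $n-k-3$ will ``surface naturally'' from the Beta-function recursion is speculative and does not identify the mechanism, so what you have is a plausible plan rather than a proof. The tail over $u>1$ is handled correctly in spirit: $\int_{u>1} f_C\,\mathrm{d}u$ is exponentially small when the bound is sub-trivial, and the bound exceeds $2$ (hence is vacuous) when $k$ is close to $n$, so that part can be dismissed with a one-line remark once the main estimate is in place.
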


The polygonal chains we wish to sample are being drawn from precisely the distributions that these theorems are concerned with. Moreover, they are supplying an upper bound on much they differ from a normal distribution. We will 
now use this to bound the total variation distance between the distribution of small collections of edges in high-dimensional 
polygons sampled from the respective symmetric measures on $Pol_2(n)$ and $\operatorname{Arm}_2(n)$.

\begin{theorem}\label{plane-B-bound} Let $P(k,n)$ be the law of the first $k$-edged segment of a random $n$-edged 
closed polygon sampled under the symmetric measure on $\operatorname{Pol}_2(n)$, and $A(k,n)$ be the law of the first $k$-edged 
segment of a random $n$-edged arm sampled under the symmetric measure on $\operatorname{Arm}_2(n)$. If $1\leq k\leq n-2$, 
then we have that the total variation between $P(k,n)$ and $A(k,n)$ is bounded above by 
$\mathscr{B}_2(k,n)=2\left(\frac{2k+3}{2n-2k-3}+\frac{(2n-k-4)(k+4)}{(n-k-4)^2}\right)$.
\end{theorem}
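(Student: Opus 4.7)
The plan is to compare both $P(k,n)$ and $A(k,n)$ to a common multivariate Gaussian reference measure, bound each comparison using one of the two Diaconis-style theorems above, and then combine them via the triangle inequality together with the data-processing inequality $d_{TV}(S_*\mu, S_*\nu) \le d_{TV}(\mu, \nu)$ (total variation is non-increasing under any measurable map).

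First I would express each side as a pushforward under $S$ of an explicit marginal. By the construction of the symmetric measure, $A(k,n) = S_*\mu_A$ where $\mu_A$ is the law of the first $2k$ real coordinates of a uniform point on $S^{2n-1}(\sqrt 2)$, and $P(k,n) = S_*\mu_P$ where $\mu_P$ is the law of the $k \times 2$ upper block of a Haar-random matrix in $O(n)$, identified with $\R^{2k} \cong \mathbb{C}^k$ via the embedding $(\vec a,\vec b)\mapsto \vec a+\I\vec b$ from the introduction. Because $S$ acts coordinate-wise, projecting first and then applying $S$ gives the same law as applying $S$ and projecting.

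Next I would invoke the two theorems in parallel. Theorem \ref{sphere-df}, applied with $n \mapsto 2n$, $r = \sqrt 2$, $k \mapsto 2k$, bounds the distance from $\mu_A$ to a product of $2k$ independent $N(0,1/n)$'s by $2(2k+3)/(2n-2k-3)$, since $\sigma^2 = (\sqrt 2 / \sqrt{2n})^2 = 1/n$. Simultaneously, Theorem \ref{ortho-df} applied with $r=k$, $s=2$ gives $\min(r,s)=2$ and hence $c=2$, so the distance from $\mu_P$ to the matrix-normal with covariance $\tfrac{1}{n} I_k \otimes I_2$ is bounded by $2\bigl((1-(k+4)/n)^{-2}-1\bigr)$, which simplifies algebraically to $2(k+4)(2n-k-4)/(n-k-4)^2$. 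The key observation is that these two Gaussian references coincide: the Kronecker covariance $\tfrac{1}{n} I_k \otimes I_2$ is, after vectorization, simply $\tfrac{1}{n} I_{2k}$, matching the product covariance of the $2k$ independent $N(0,1/n)$'s. Call this common reference $\nu$. The data-processing inequality transfers each bound through $S$ to give bounds on $d_{TV}(P(k,n), S_*\nu)$ and $d_{TV}(S_*\nu, A(k,n))$, and the triangle inequality then yields exactly $\mathscr{B}_2(k,n)$.

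The step I expect to be the main obstacle is precisely this covariance-matching, namely carefully unfolding the Kronecker-product covariance from Theorem \ref{ortho-df} into the vectorized form compatible with the product covariance of Theorem \ref{sphere-df}, so that a single intermediate reference measure genuinely exists and the triangulation goes through. After that, all that remains is the elementary algebraic simplification of $(1-(k+4)/n)^{-2}-1$ and summing the two bounds. (As a minor caveat, for $k=1$ one has $\min(k,2)=1$ and hence $c=\tfrac12$ in Theorem \ref{ortho-df}, yielding a quantity smaller than the $c=2$ expression used here; the stated $\mathscr{B}_2(k,n)$ therefore remains a valid, though not tight, upper bound at that edge case.)
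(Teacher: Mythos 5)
Your proposal is correct and matches the paper's approach: both compare $P(k,n)$ and $A(k,n)$ to a common Gaussian reference via Theorems~\ref{ortho-df} and~\ref{sphere-df}, observe that $\tfrac1n I_k\otimes I_2$ vectorizes to $\tfrac1n I_{2k}$ so the two reference measures coincide, and finish with the triangle inequality. You are slightly more scrupulous than the paper in two small respects that are worth noting: you explicitly invoke the data-processing inequality to transfer the total-variation bound through the coordinate-wise squaring map $S$ (the paper handles this implicitly by declaring, in the paragraph preceding Theorem~\ref{ortho-df}, that one ``need only focus'' on the pre-image distributions, rather than citing the monotonicity of $\|\cdot\|_{TV}$ under pushforward), and you flag the $k=1$ edge case where $\min(k,2)=1$ gives $c=\tfrac12$ rather than $c=2$, so that the stated $\mathscr{B}_2(1,n)$ is valid but looser than what Theorem~\ref{ortho-df} actually yields; the paper silently uses $c=2$ for all $k\geq 1$.
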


\begin{proof} First, notice that $P(k,n)$ is given by the law of $Z$, the $k\times 2$ upper left block of a 
matrix sampled uniformly on $O(n)$. Likewise, $A(k,n)$ would be the law of the first $2k$ coordinates of a 
point sampled from the sphere $\displaystyle\left\{\xi:\sum_{i=1}^{2n} \xi_i^2 = (\sqrt{2})^2\right\}$. To 
use Theorems~\ref{ortho-df} and ~\ref{sphere-df}, we will need $X$, the $2k$ multivariate normal 
distribution with mean 0 and covariance matrix $\frac{1}{n} I_{2k}$, and $P_{\sqrt{2}/\sqrt{2n}}^{2k}$, 
the law of $\frac{1}{\sqrt{n}}\zeta_1,\dots,\frac{1}{\sqrt{n}}\zeta_{2k}$ with the $\zeta_i$ independent 
standard normals. Here we find that this $P_{1/\sqrt{n}}^{2k}$ is a multivariate distribution with mean 0 
and covariance given by 

\noindent
$\left(\frac{1}{\sqrt{n}}I_{2k}\right)\left(\frac{1}{\sqrt{n}} I_{2k}\right)^\intercal$ = $\frac{1}{n} I_{2k}$, 
so we see that $X$ and $P_{1/\sqrt{n}}^{2k}$ are multivariate normal distributions with the same mean and 
covariance. This distribution is often denoted by 
$\displaystyle N\left(\mathbf{0}, \frac{1}{n} I_{2k}\right)$. Since total variation is a norm on 
measures, it satisfies the triangle inequality. We then find that:
\begin{align}
\|P(k,n)-A(k,n)\| 
&\leq \|Z-X\| + \|P_{1/\sqrt{n}}^{2k} - Q_{n,\sqrt{2},2k}\| \\
&\leq 2\left( \left(1-\frac{k+4}{n} \right)^{-2} - 1 \right) + 2\left(\frac{2k+3}{2n-2k-3}\right) \\
&=2\left( \frac{(2n-k-4)(k+4)}{(n-k-4)^2} + \frac{2k+3}{2n-2k-3}\right).
\end{align} 
\end{proof}

\begin{proposition}\label{Plane-B-Asymp} $\mathscr{B}_2(k,n)$ is asymptotic to $\frac{6k+19}{n}$. 
\end{proposition}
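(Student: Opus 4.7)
The plan is to take the limit as $n \to \infty$ of the ratio $\mathscr{B}_2(k,n) / \tfrac{6k+19}{n}$ (with $k$ held fixed) and verify it equals $1$. Since $\mathscr{B}_2(k,n)$ is already given as a sum of two rational functions, the cleanest route is to analyze each summand separately and then combine.

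First, I would look at the summand $\frac{2k+3}{2n-2k-3}$. Multiplying top and bottom by $1/n$, the denominator becomes $2 - (2k+3)/n$, which tends to $2$. So this summand is asymptotic to $\frac{2k+3}{2n}$, which contributes $\frac{2k+3}{n}$ after the leading factor of $2$ in $\mathscr{B}_2$.

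Next I would analyze $\frac{(2n-k-4)(k+4)}{(n-k-4)^2}$. Expanding the numerator as $(k+4)(2n-k-4)$ and dividing by $n^2$, the numerator becomes $(k+4)(2 - (k+4)/n)$ which tends to $2(k+4)$, while the denominator $(1-(k+4)/n)^2$ tends to $1$. So this summand is asymptotic to $\frac{2(k+4)}{n}$, contributing $\frac{4(k+4)}{n} = \frac{4k+16}{n}$ after the factor of $2$.

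Adding the two asymptotic contributions gives $\frac{2k+3}{n}+\frac{4k+16}{n} = \frac{6k+19}{n}$, which is the claimed asymptotic. The only place a reader could balk is whether ``asymptotic to'' is meant as $n\to\infty$ with $k$ fixed, or whether it is a joint asymptotic statement; I would state the convention explicitly at the top of the proof. There is no real obstacle here: the argument is a routine two-term Taylor/leading-order expansion, and the estimate is valid precisely in the regime $n \gg k$ where Theorem~\ref{plane-B-bound} is nontrivial (i.e., $k \leq n-2$ and the denominators $n-k-4$, $2n-2k-3$ are positive).
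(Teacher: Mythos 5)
Your computation is correct and is the natural one: with $k$ fixed and $n\to\infty$, the first summand $\tfrac{2k+3}{2n-2k-3}\sim\tfrac{2k+3}{2n}$, the second $\tfrac{(2n-k-4)(k+4)}{(n-k-4)^2}\sim\tfrac{2(k+4)}{n}$, and after the outer factor of $2$ these sum to $\tfrac{6k+19}{n}$. The paper states this proposition without proof, so there is no authorial argument to compare against; your direct leading-order expansion is the right way to supply one. One small wording slip: in the second summand you say you divide ``the numerator by $n^2$,'' but what you actually do (correctly) is divide numerator by $n$ and denominator by $n^2$ — the displayed expression $(k+4)\left(2-\tfrac{k+4}{n}\right)$ is the numerator over $n$, not over $n^2$. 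Also a good instinct to flag the convention ($n\to\infty$, $k$ fixed); the paper tacitly uses exactly that reading, as it later invokes this asymptotic only for fixed small $k$ such as $k=2,3,4$.
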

%

Before moving on, let us take a moment to discuss this bound in more detail: It 
should be pointed out that $\mathscr{B}_2(k,n)$ is decreasing to this asymptotic. Specifically, for any $k<\frac{n}{2}$, 
we will have $\mathscr{B}_2(k,n)>\frac{6k+19}{n}$. Nonetheless, if we wanted to let $k$ grow with $n$, 
then as long as $k$ is $o(n)$ [for example $k=\lambda n^p$ for any $p\in[0,1)$ and $\lambda\in(0,\infty)$], 
we have that $\displaystyle\lim_{n\rightarrow\infty} \mathscr{B}_2(k,n)=0$. Likewise, if we write $k=\alpha n$, we see that:
\begin{align}
\lim_{n\rightarrow\infty} \mathscr{B}_2(\alpha n,n) &=\lim_{n\rightarrow\infty} 2\left( \frac{(2n-\alpha n-4)(\alpha n+4)}{(n-\alpha n-4)^2} + \frac{2\alpha n+3}{2n-2\alpha n-3}\right)\\
&=2\left( \frac{(2-\alpha)\alpha}{(1-\alpha)^2} + \frac{\alpha}{1-\alpha}\right)\\
&=\frac{2\alpha(3-2\alpha)}{(1-\alpha)^2}.\end{align}

Setting up this quadratic inequality, we quickly find that for $\alpha > \frac{4-\sqrt{11}}{5} \simeq 0.136675$, this limit, 
$\frac{2\alpha(3-2\alpha)}{(1-\alpha)^2}$, is greater than 1. We point this out, as this tells us that as long as, in the 
limit, $k< 13\%$ of $n$, we can glean some information about the distribution of $k$-edged segments in 
$\text{Pol}_2(n)$ by virtue of our knowledge of the distribution of $k$-edged segments in $\operatorname{Arm}_2(n)$. 
%

\begin{definition} Call a function $f\colon\operatorname{Arm}_d(n)\rightarrow \mathbb{R}$ a $k$-edged locally defined 
function if 
$f([\vec{e_1},\vec{e_2},\dots,\vec{e_k},\vec{u_1},\dots,\vec{u_{n-k}}])=f([\vec{e_1},\vec{e_2},\dots,\vec{e_k},\vec{v_1},\dots,\vec{v_{n-k}}])$ 
for all $\vec{e_i},\vec{u_j},\vec{v_j}\in\mathbb{R}^d,$ 

\noindent
$ i\in\{1,2,\dots,k\}, j\in\{1,2,\dots,n-k\}$.
\end{definition}

\begin{theorem}\label{main-planar} Let $f$ be an essentially bounded, $k$-edged 
locally defined function. Then the expectation of $f$ over $\operatorname{Pol}_2(n)$ may be approximated 
by the expectation of $f$ over $\operatorname{Arm}_2(n)$ to within $M \mathscr{B}_2(k,n)$, where $M$ is 
a bound for $f$ almost everywhere.
\end{theorem}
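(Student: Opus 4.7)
The plan is to reduce the result to a direct application of Theorem \ref{plane-B-bound}. Because $f$ is $k$-edged locally defined, its value depends only on the first $k$ edge vectors; equivalently, there is a measurable function $g$ on $(\mathbb{R}^2)^k$, bounded by $M$ almost everywhere, with $f([\vec{e}_1,\dots,\vec{e}_n]) = g(\vec{e}_1,\dots,\vec{e}_k)$. Integrating $f$ against the symmetric measures on $\operatorname{Pol}_2(n)$ and $\operatorname{Arm}_2(n)$ therefore collapses to integrating $g$ against their pushforwards to $(\mathbb{R}^2)^k$, which by definition are $P(k,n)$ and $A(k,n)$:
$$
\mathbb{E}_{\operatorname{Pol}_2(n)}[f] - \mathbb{E}_{\operatorname{Arm}_2(n)}[f] = \int g\,dP(k,n) - \int g\,dA(k,n) = \int g\,d\bigl(P(k,n) - A(k,n)\bigr).
$$

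Next I would invoke the standard duality between total variation and bounded integration: if $|g|\le M$ almost everywhere and $\mu,\nu$ are probability measures on a common space, then $\bigl\lvert\int g\,d(\mu-\nu)\bigr\rvert \le M\,\|\mu - \nu\|_{TV}$. This follows from the Hahn--Jordan decomposition $\mu - \nu = \rho_+ - \rho_-$: the total mass $\rho_+(X) + \rho_-(X)$ coincides with $\|\mu-\nu\|_{TV}$ under the same normalization used for the bounds in Theorems \ref{ortho-df}, \ref{sphere-df}, and \ref{plane-B-bound}. Applying this with $\mu = P(k,n)$ and $\nu = A(k,n)$ reduces matters to bounding $\|P(k,n) - A(k,n)\|_{TV}$, which is exactly what Theorem \ref{plane-B-bound} does: $\|P(k,n) - A(k,n)\|_{TV} \le \mathscr{B}_2(k,n)$ for $1\le k\le n-2$. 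Combining the two estimates yields $\bigl\lvert\mathbb{E}_{\operatorname{Pol}_2(n)}[f] - \mathbb{E}_{\operatorname{Arm}_2(n)}[f]\bigr\rvert \le M\mathscr{B}_2(k,n)$.

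The argument is essentially mechanical, so the closest thing to an obstacle is confirming that the various total-variation normalizations line up: because the Diaconis bounds already carry a leading factor of $2$, they are implicitly being stated in the "total mass" convention, and the Hahn--Jordan duality above is exact in that same convention, so no stray factor of $2$ needs to be absorbed into the final bound. Once this convention is pinned down, everything reduces to one invocation of Theorem \ref{plane-B-bound}.
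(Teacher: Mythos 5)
Your proof is correct and follows essentially the same route as the paper: reduce to the pushforward laws of the first $k$ edges, then apply the duality $\lvert \int g\,d(\mu-\nu)\rvert \le M\,\lVert\mu-\nu\rVert_{TV}$ together with Theorem~\ref{plane-B-bound}. Your observation about the total-variation normalization is also right — the Diaconis-style bounds and the paper's own appendix use the $L^1$ (total-mass) convention, under which the Hahn–Jordan duality holds with no stray factor of $2$.
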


\begin{proof} The expectation of $f$ over $\operatorname{Arm}_2(n)$, $E_{\operatorname{Arm}_2(n)}(f)$, is given by 
$\int_{\operatorname{Arm}_2(n)}f \mu_A$, where $\mu_A$ is the symmetric measure on $\operatorname{Arm}_2(n)$. Likewise, 
the expectation of $f$ over $\text{Pol}_2(n)$, $E_{\text{Pol}_2(n)}(f)$, is given by $\int_{\text{Pol}_2(n)}f \nu_P$, 
where $\nu_P$ is the symmetric measure on $\operatorname{Arm}_2(n)$. Since $f$ is a 

\noindent$k$-edged locally defined 
function, we may integrate out the last $n-k$ edges to obtain 

\noindent$E_{\operatorname{Arm}_2(n)}=\int_{\mathscr{A}_2(k)} f(q) \mu_n^k$, where $\mu_n^k$ is the law of the first 
$k$ edges of a polygon sampled from the symmetric measure on $\operatorname{Arm}_2(n)$. Similarly, we can see 
that $E_{\text{Pol}_2(n)}=\int_{\mathscr{A}_2(k)} f(q) \nu_n^k$, where $\nu_n^k$ is the law of the first 
$k$ edges of a polygon sampled from the symmetric measure on $\text{Pol}_2(n)$. We may therefore write:


\begin{align}
\vert E_{Pol_2(n)}(f) -E_{\operatorname{Arm}_2(n)}(f)\vert &= \vert \int f \nu_n^k -\int f\mu_n^k\vert \\
&= \vert \int f (\nu_n^k - \mu_n^k) \vert \\
&\leq \|f\|_\infty\|\nu_n^k-\mu_n^k\|_{TV} \\
&\leq \|f\|_\infty\mathscr{B}_2(k,n). \end{align}
\end{proof}

\begin{corollary}\label{main-planar-cor} Let $f$ be an essentially bounded, $k$-edged locally 
defined function. Let $E_p(n)$ stand for the expectation of $f$ over 
$\text{Pol}_2(n)$, and $E_a(n)$ stand for the expectation of $f$ over $\operatorname{Arm}_2(n)$. Further, let 
$\widetilde{E_p}(n)$ stand for the expectation of the sum 
of $f$ over a polygon in $Pol_2(n)$, by which we mean the expectation of the quantity 
$\displaystyle \sum_{i=1}^n f(e_{1+i},e_{2+i},\dots,e_{n+i})$, where the indices are taken modulo $n$.
Likewise, let $\widetilde{E_a}(n)$ stand for the expectation 
of the sum of $f$ over a polygon in $\operatorname{Arm}_2(n)$. 
Provided that $\displaystyle\lim_{n\rightarrow\infty} n E_a(n) = \infty$, then 
$\displaystyle\lim_{n\rightarrow\infty}\frac{E_p(n)}{E_a(n)}=1$ and 
$\displaystyle\lim_{n\rightarrow\infty}\frac{\widetilde{E_p}(n)}{\widetilde{E_a}(n)}=1$.
\end{corollary}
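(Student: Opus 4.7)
The plan is to combine Theorem~\ref{main-planar} with the asymptotic from Proposition~\ref{Plane-B-Asymp} to bound the absolute error $|E_p(n)-E_a(n)|$, and then to reduce the sum-expectation statement to the single-expectation one using the permutation invariance of the symmetric measure noted at the end of Section~\ref{sec:introduction}.

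For the first limit, I would begin by noting that the hypothesis $n E_a(n) \to \infty$ forces $E_a(n)$ to be eventually nonzero, so the ratio is well-defined for large $n$. Applying Theorem~\ref{main-planar} with $M = \|f\|_\infty$ gives
\[
\left| \frac{E_p(n)}{E_a(n)} - 1 \right| \;\leq\; \frac{M\,\mathscr{B}_2(k,n)}{E_a(n)} \;=\; \frac{M \cdot n\,\mathscr{B}_2(k,n)}{n\,E_a(n)}.
\]
By Proposition~\ref{Plane-B-Asymp} the numerator $n\,\mathscr{B}_2(k,n)$ is bounded in $n$ (converging to $M(6k+19)$), while the denominator diverges by hypothesis, so the right-hand side vanishes and $E_p(n)/E_a(n) \to 1$.

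For the second limit, I would invoke the fact, emphasized in the introduction, that the symmetric measures on $\operatorname{Pol}_2(n)$ and $\operatorname{Arm}_2(n)$ are invariant under arbitrary permutations of the edge vectors; in particular they are invariant under cyclic shifts. Since $f$ is $k$-edged locally defined, each summand $f(e_{1+i},\ldots,e_{n+i})$ is really a function of $k$ consecutive edges (indices mod $n$), and the permutation invariance forces it to have the same expectation as $f(e_1,\ldots,e_k)$. Hence $\widetilde{E_p}(n) = n\,E_p(n)$ and $\widetilde{E_a}(n) = n\,E_a(n)$, and the second ratio equals the first, which we have already shown tends to $1$. There is no serious obstacle here; the only thing to watch is that the permutation invariance on $\operatorname{Pol}_2(n)$ is genuinely available (it descends from invariance of Haar measure on $O(n)$ under left multiplication by permutation matrices), which is exactly what the introduction records.
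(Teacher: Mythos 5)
Your proof of the first limit is essentially identical to the paper's: apply Theorem~\ref{main-planar} to get a two-sided bound, divide by $E_a(n)$, and feed in Proposition~\ref{Plane-B-Asymp} together with $nE_a(n)\to\infty$ to kill the error term.

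For the second limit your route is genuinely cleaner than the paper's, and you should be aware of the difference. You take the statement's definition at face value: the sum has exactly $n$ terms with indices read modulo $n$, and the permutation invariance of the symmetric measure on both spaces then gives $\widetilde{E_p}(n)=nE_p(n)$ and $\widetilde{E_a}(n)=nE_a(n)$, so the second ratio collapses to the first. The paper instead asserts $\widetilde{E_a}(n)=(n-k-1)E_a(n)$ --- apparently treating the arm sum as having only the consecutive (non-wrapping) $k$-windows --- and then writes the inequality $\widetilde{E_a}(n)-nM\mathscr{B}_2(k,n)\leq\widetilde{E_p}(n)\leq\widetilde{E_a}(n)+nM\mathscr{B}_2(k,n)$, which does not actually follow from $|E_p(n)-E_a(n)|\leq M\mathscr{B}_2(k,n)$ when $\widetilde{E_p}$ and $\widetilde{E_a}$ are scaled by different constants; a leftover $(k+1)E_a(n)$ term is silently dropped. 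The paper then carries an extraneous factor $\tfrac{n}{n-k-1}$ through the squeeze argument. Under the statement's literal definition your identity $\widetilde{E_a}(n)=nE_a(n)$ is the correct one, and it sidesteps this issue entirely. Either reading gives the limit $1$ for fixed $k$, since $\tfrac{n}{n-k-1}\to 1$, but your version is both tidier and better aligned with what the corollary actually states. One tiny nit: the limit of $n\mathscr{B}_2(k,n)$ is $6k+19$, not $M(6k+19)$; you clearly meant the former since the $M$ is carried separately.
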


\begin{proof} From Theorem~$\ref{main-planar}$, we see that 
$E_a(n)-M\mathscr{B}_2(k,n)\leq E_p(n)\leq E_a(n)+M\mathscr{B}_2(k,n)$. 
Dividing through by $E_a(n)$, this becomes 
$1-M\frac{\mathscr{B}_2(k,n)}{E_a(n)} \leq \frac{E_p(n)}{E_a(n)}\leq 1+M\frac{\mathscr{B}_2(k,n)}{E_a(n)}$. 
From Proposition~$\ref{Plane-B-Asymp}$, we see that $\mathscr{B}_2(k,n)$ is asymptotic 
to $\frac{6k+19}{n}$. This, in addition to 
our assumption on $\displaystyle \lim_{n\rightarrow \infty}n E_a(n)$, tells us that 
$\displaystyle \lim_{n\rightarrow\infty} M\frac{\mathscr{B}_2(k,n)}{E_a(n)}=\lim_{n\rightarrow\infty} M\frac{6k+19}{n E_a(n)}=0$. The first result 
then follows from the Squeeze Theorem.

In the second case, note that from the invariance under permutations, we have that 

\noindent$\widetilde{E_p}(n)=n E_p(n)$ and $\widetilde{E_a}(n)=(n-k-1) E_a(n)$. 
This produces the inequality 

\noindent$\widetilde{E_a}(n)-nM\mathscr{B}_2(k,n)\leq \widetilde{E_p}(n)\leq \widetilde{E_a}(n)+n M\mathscr{B}_2(k,n)$, 
which we can divide through to produce: 
$1-M\frac{\mathscr{B}_2(k,n)}{E_a(n)}\left(\frac{n}{n-k-1}\right)\leq \frac{\widetilde{E_p}(n)}{\widetilde{E_p}(n)}\leq 1+ M\frac{\mathscr{B}_2(k,n)}{E_a(n)}\left(\frac{n}{n-k-1}\right)$. 
The result then follows from the Squeeze Theorem and our earlier observation that 
$M\frac{\mathscr{B}_2(k,n)}{E_a(n)}\rightarrow 0$ as 
$n\rightarrow\infty$.
\end{proof}

\section[Curvature]{Curvature}
The total curvature of a planar polygon is defined as the sum of the turning angles, and when we sample under the symmetric 
measure, each turning angle has the same expectation. Combining this with the fact that an 
expectation of a sum is the sum of the expectations (even for highly correlated data), we see that the expectation of total curvature will be 
$n$ times the expectation of a turning angle. Since we know that the edge vectors for a polygonal chain in $\operatorname{Arm}_d(n)$ have direction sampled uniformly from the sphere $S^{d-1}$, it is easy to see that the turning angle has expected value of $\frac{\pi}{2}$. Using Theorem~$\ref{main-planar}$, we see that 
$\vert E_{Pol_2(n)}(\theta) - \frac{\pi}{2} \vert \leq \pi \mathscr{B}_2(2,n)$. Moreover, we 
know that a closed polygon will have a higher expected turning angle than a polygonal arm. As 
such, we see that we can bound the expectation of the total curvature of a closed planar polygon as:

$$ 0 \leq E_{pol}(\kappa) - n\frac{\pi}{2} \leq 2n\pi \left( \frac{7}{2n-7} + \frac{12(n-3)}{(n-6)^2} \right)$$

Of particular interest, we see from taking the limit of this inequality, that the expectation of total 
curvature of planar polygons lies between $n\frac{\pi}{2} $ and $31\pi+n\frac{\pi}{2}+O\left(n^{-1}\right)$. 
Of course we already have a trivial upper bound of $n\pi$, but the bound we show is better, provided that $n>69$.

Even though it has already been shown in \cite{Can2} that $\frac{E_{Pol_2(n)}(\kappa)}{E_{\operatorname{Arm}_2(n)}(\kappa)}\rightarrow 1$, 
our corollary here shows this not to be an artifact of total curvature, but of the proximity in distribution 
between pairs of edges in open polygonal chains and pairs of edges in closed polygonal chains. Let us now 
look at the variance of total curvature.

\begin{proposition}\label{adapt-prop} The variance of total curvature of a random polygon sampled under the 
symmetric measure on $Pol_2(n)$ is bounded by 
$$M= \pi^2\left(n\mathscr{B}_2(2,n) + 2n \mathscr{B}_2(3,n)+ (n^2-3n)\mathscr{B}_2(4,n)\right) - n^2 \left(\pi\epsilon_n +\epsilon_n^2\right),$$ 
where $e_n=E_{Pol_2(n)}[\theta_1]-\frac{\pi}{2}$ is surplus of the expectation of the turning 
angle of a polygon over $Pol_2(n)$ over $\frac{\pi}{2}$.
\end{proposition}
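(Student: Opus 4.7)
The plan is to expand $\operatorname{Var}_P(\kappa) = E_P[\kappa^2] - (E_P[\kappa])^2$, where $\kappa = \theta_1 + \cdots + \theta_n$, and then approximate each turning-angle product appearing in $E_P[\kappa^2]$ by its Arm-side counterpart via Theorem~\ref{main-planar}.

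First, I would exploit the permutation invariance of the symmetric measure on $\operatorname{Pol}_2(n)$ to classify the $n^2$ ordered index pairs contributing to $E_P[\kappa^2] = \sum_i E_P[\theta_i^2] + \sum_{i \neq j} E_P[\theta_i \theta_j]$ according to how many edges the two turning angles share. This yields $n$ diagonal pairs, $2n$ cyclically adjacent pairs, and $n^2 - 3n$ pairs with disjoint edge supports, so
\begin{equation*}
E_P[\kappa^2] = n\, E_P[\theta_1^2] + 2n\, E_P[\theta_1 \theta_2] + (n^2 - 3n)\, E_P[\theta_1 \theta_3].
\end{equation*}
The three representative functions $\theta_1^2$, $\theta_1 \theta_2$, and $\theta_1 \theta_3$ are $k$-edged locally defined for $k = 2, 3, 4$ respectively, and each is bounded almost everywhere by $\pi^2$ since $\theta_i \in [0,\pi]$. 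Theorem~\ref{main-planar} then delivers $E_P[\theta_1^2] \leq E_A[\theta_1^2] + \pi^2 \mathscr{B}_2(2,n)$, $E_P[\theta_1 \theta_2] \leq E_A[\theta_1 \theta_2] + \pi^2 \mathscr{B}_2(3,n)$, and $E_P[\theta_1 \theta_3] \leq E_A[\theta_1 \theta_3] + \pi^2 \mathscr{B}_2(4,n)$.

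Next I would compute the Arm-side expectations exactly. The Haar measure on $S^{2n-1}(\sqrt{2})$ is invariant under the coordinate-wise unit complex rotations $z_i \mapsto \lambda_i z_i$, so after pushing forward by the squaring map $S$ the edge \emph{directions} become independent and uniform on $S^1$ under the symmetric measure on $\operatorname{Arm}_2(n)$. Consequently every $\theta_i$ is uniform on $[0,\pi]$, giving $E_A[\theta_i^2] = \pi^2/3$, and by conditioning on the shared edge direction in the adjacent case (or by outright independence in the non-adjacent case) one obtains $E_A[\theta_1 \theta_2] = E_A[\theta_1 \theta_3] = \pi^2/4$. Substituting these exact values and subtracting $(E_P[\kappa])^2 = n^2(\pi/2 + \epsilon_n)^2 = n^2\pi^2/4 + n^2\pi\epsilon_n + n^2 \epsilon_n^2$ collapses the $n^2$-scale baseline against $(n\pi/2)^2$, leaving the stated $\pi^2$-weighted sum of $\mathscr{B}_2$ terms together with the $-n^2(\pi \epsilon_n + \epsilon_n^2)$ correction that defines $M$.

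The main obstacle is justifying $E_A[\theta_1 \theta_2] = \pi^2/4$: the two adjacent angles share edge $e_2$, so independence is not immediate. The cleanest route is to condition on the direction of $e_2$; the per-coordinate rotational symmetry of the sphere measure renders $\theta_1$ and $\theta_2$ conditionally independent and each conditionally uniform on $[0,\pi]$, so the product expectation factors as $(\pi/2)^2$. Once this point is secured, the remainder of the argument is direct algebraic simplification.
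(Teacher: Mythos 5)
Your decomposition, the three-way classification of index pairs, and the application of Theorem~\ref{main-planar} with $k=2,3,4$ to $\theta_1^2$, $\theta_1\theta_2$, $\theta_1\theta_3$ all match the paper's argument. There is, however, a gap at the final step: the algebra does not collapse to the stated $M$ with the Arm-side value you compute. You take $E_A[\theta_1^2]=\pi^2/3$, the correct second moment of a uniform $[0,\pi]$ angle, whereas the paper's proof asserts $E_{\operatorname{Arm}_2(n)}[\theta_1^2]=\pi^2/4$, and it is only with $\pi^2/4$ that the zeroth-order constants cancel cleanly. With your $\pi^2/3$ one gets
\[
n\cdot\tfrac{\pi^2}{3}+2n\cdot\tfrac{\pi^2}{4}+(n^2-3n)\cdot\tfrac{\pi^2}{4}=\tfrac{n^2\pi^2}{4}+\tfrac{n\pi^2}{12},
\]
so subtracting $(E_P[\kappa])^2=n^2(\pi/2+\epsilon_n)^2$ leaves $M+\tfrac{n\pi^2}{12}$, not $M$. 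You either need to justify $E_A[\theta_1^2]=\pi^2/4$ (hard to reconcile with $\theta_1$ uniform on $[0,\pi]$, since it would force $\operatorname{Var}_A(\theta_1)=0$) or you must carry the residual $n\pi^2/12$ through and note that the resulting constant differs from the proposition's. As written, the proposal asserts a cancellation it has not verified.
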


\begin{corollary}\label{adapt-cor} The variance of total curvature of a random polygon sampled under the 
symmetric measure on $Pol_2(n)$ is bounded above by $(n \pi)^2\mathscr{B}_2(4,n)\simeq 43n\pi^2.$
\end{corollary}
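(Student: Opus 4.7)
The plan is to unpack the bound $M$ from Proposition~\ref{adapt-prop} and absorb each of its $\mathscr{B}_2$ terms into a single multiple of $\mathscr{B}_2(4,n)$, then read off the stated asymptotic from Proposition~\ref{Plane-B-Asymp}. The whole argument is a short algebraic cleanup, so the proof should be just a few lines.

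First I would verify that $\mathscr{B}_2(k,n)$ is nondecreasing in $k$ for $1 \leq k \leq n-4$. Using the closed form
$$\mathscr{B}_2(k,n) = 2\left(\frac{2k+3}{2n-2k-3} + \frac{(2n-k-4)(k+4)}{(n-k-4)^2}\right),$$
the first summand has numerator increasing and denominator decreasing in $k$; for the second, the numerator's derivative in $k$ is $2n - 2k - 8$, which is positive in the stated range, while the denominator $(n-k-4)^2$ shrinks. Combined with the coefficient identity $n + 2n + (n^2 - 3n) = n^2$, this monotonicity gives
$$n\mathscr{B}_2(2,n) + 2n\mathscr{B}_2(3,n) + (n^2 - 3n)\mathscr{B}_2(4,n) \leq n^2 \mathscr{B}_2(4,n).$$

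Next I would drop the subtracted correction $n^2(\pi \epsilon_n + \epsilon_n^2)$. The discussion preceding Proposition~\ref{adapt-prop} records that a closed polygon has strictly larger expected turning angle than a polygonal arm, so $\epsilon_n = E_{Pol_2(n)}[\theta_1] - \pi/2 \geq 0$ and hence $n^2(\pi\epsilon_n + \epsilon_n^2) \geq 0$. Deleting this nonnegative quantity from $M$ only enlarges the bound, yielding
$$M \leq \pi^2 n^2 \mathscr{B}_2(4,n) = (n\pi)^2 \mathscr{B}_2(4,n).$$
The numerical asymptotic then follows instantly from Proposition~\ref{Plane-B-Asymp} specialized at $k=4$, since $\mathscr{B}_2(4,n) \sim (6\cdot 4 + 19)/n = 43/n$, so $(n\pi)^2 \mathscr{B}_2(4,n) \sim 43 n \pi^2$.

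There is no real obstacle here; everything rests on Proposition~\ref{adapt-prop}, and the only step requiring any work is the monotonicity check for $\mathscr{B}_2(k,n)$ in $k$, which is entirely elementary. The only conceptual point worth flagging is the use of the sign of $\epsilon_n$, which is what lets the $-n^2(\pi\epsilon_n + \epsilon_n^2)$ term be discarded cleanly.
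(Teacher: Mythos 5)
Your proposal is correct and follows essentially the same route as the paper: both establish that $\mathscr{B}_2(k,n)$ is increasing in $k$ (you via the derivative $2n-2k-8$ of the numerator, the paper via concavity of the quadratic and the location of its vertex at $k=n-4$, which are the same observation), then use the coefficient identity $n+2n+(n^2-3n)=n^2$ to collapse the three $\mathscr{B}_2$ terms into $n^2\mathscr{B}_2(4,n)$, discard the nonnegative correction $n^2(\pi\epsilon_n+\epsilon_n^2)$, and read off the asymptotic from Proposition~\ref{Plane-B-Asymp}. No meaningful divergence from the paper's argument.
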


\begin{proof}[Proof of Proposition]
We know that the covariance of a pair can be computed as 

\noindent$\operatorname{Cov}(\theta_i,\theta_j)=E[(\theta_i-E[\theta_i])(\theta_j-E[\theta_j])]=E[\theta_i\theta_j]-E[\theta_i]E[\theta_j]$. 
We already have established the bounds that $\frac{\pi}{2}\leq E(\theta_1)\leq\frac{\pi}{2}+\pi\mathscr{B}_2(2,n)$. 
For convenience, let $t_n=E(\theta_1)$ and define $\epsilon_n=t_n-\frac{\pi}{2}$, so that $\epsilon_n>0$ and 
$\epsilon_n\rightarrow 0$ as $n\rightarrow \infty$. 

We may partition the pairs $(\theta_i,\theta_j)$ into three categories: (1) 
$j\equiv i \bmod{n}$, 
(2) $j\equiv i\pm 1\bmod{n}$, and (3) $j\equiv i\pm k\bmod{n}$ for 
$1<k<\frac{n}{2}$. By the symmetry of the measure, the covariance of any pair will be the 
same as the covariance of any other pair from the same category. More, we see that in each 
category, the covariance is the integral of an essentially bounded function determined by a set 
of consecutive edges (a pair of edges in category 1, a triple of edges in category 2, and quadruple of edges in category 3).

Recall then that the variance of a sum is equal to the sum of the covariance of the pairs 
. This tells us that the variance of total 
curvature may be partitioned into the sum:
\begin{align}
Var\left(\sum_{i=1}^n\theta_i\right)&=\left(\sum_{i=1}^n \operatorname{Cov}(\theta_i,\theta_i)\right) +2\left(\sum_{i=1}^{n} \operatorname{Cov}(\theta_i,\theta_{i+1})\right) +2\left(\sum_{i=1}^{n-2}\sum_{k=2}^{n-i} \operatorname{Cov}(\theta_i,\theta_{i+k})\right) \\
&=n \operatorname{Cov}(\theta_1,\theta_1) + 2n \operatorname{Cov}(\theta_1,\theta_{2}) +n(n-3) \operatorname{Cov}(\theta_1,\theta_3). \end{align}

By choosing to compute $\operatorname{Cov}(\theta_i,\theta_j)=E[\theta_i\theta_j]-E[\theta_i]E[\theta_j]$, 
and recalling our definition that $t_n=E[\theta_i]$, we may express this as:
\begin{align}
\operatorname{Var}\left(\sum_{i=1}^n\theta_i\right)&=n (E[\theta_1^2]-t_n^2)+2n (E[\theta_1\theta_2]-t_n^2)+(n^2-3n)(E[\theta_1\theta_3]-t_n^2)\\
&= n E[\theta_1^2] + 2n E[\theta_1\theta_2] + (n^2-3n)E[\theta_1 \theta_3] - (n t_n)^2.\end{align}

We can compute $E[\theta_1^2]$ as the integral of a scale-invariant function determined 
by a pair of edges that is essentially bounded by $\pi^2$. This means that we may use 
Theorem~$\ref{main-planar}$ to conclude that 
$\vert E_{Pol_2(n)}[\theta_1^2] - E_{\operatorname{Arm}_2(n)}[\theta_1^2]\vert \leq \pi^2 \mathscr{B}_2(2,n)$. 
A simple calculation shows that $E_{\operatorname{Arm}_2(n)}[\theta_1^2]=\frac{\pi^2}{4}$, so we have that 
$E_{Pol_2(n)}[\theta_1^2]\leq \pi^2 \left(\mathscr{B}_2(2,n)+\frac{1}{4}\right)$.

Likewise, $E[\theta_1\theta_2]$ and $E[\theta_1\theta_3]$ are computed as the integral of 
scale-invariant functions determined by three and four edges respectively. The independence of 
edge directions in $\operatorname{Arm}_2(n)$ tells us that 
$E_{\operatorname{Arm}_2(n)}[\theta_1\theta_2]=E_{\operatorname{Arm}_2(n)}[\theta_1]E_{\operatorname{Arm}_2(n)}[\theta_2]=\frac{\pi^2}{4}$. 
So we see that $E_{Pol_2(n)}[\theta_1]E_{Pol_2(n)}[\theta_3]\leq \pi^2\left(\mathscr{B}_2(3,n)+\frac{1}{4}\right)$ 
and 

\noindent$E_{Pol_2(n)}[\theta_1]E_{Pol_2(n)}[\theta_2]\leq \pi^2\left(\mathscr{B}_2(4,n)+\frac{1}{4}\right)$.

This allows us to place an upper bound on the variance of total curvature for $Pol_2(n)$ as follows:
\begin{align}
\operatorname{Var}\left(\sum_{i=1}^n\theta_i\right)&= n E[\theta_1^2] + 2n E[\theta_1\theta_2] + (n^2-3n)E[\theta_1 \theta_3] - (n t_n)^2\\
&\leq n \pi^2 \mathscr{B}_2(2,n) + 2n \pi^2\mathscr{B}_2(3,n)+ (n^2-3n)\pi^2\mathscr{B}_2(4,n)+\frac{n^2\pi^2}{4} - n^2 (\frac{\pi}{2}+\epsilon_n)^2\\
&\leq \pi^2\left(n\mathscr{B}_2(2,n) + 2n \mathscr{B}_2(3,n)+ (n^2-3n)\mathscr{B}_2(4,n)\right) - n^2 \left(\pi\epsilon_n +\epsilon_n^2\right)\\
&\leq \pi^2 n^2\mathscr{B}_2(4,n)\end{align}

Next, we claim that, as one would naturally suspect, $\mathscr{B}_2(k,n)$ is increasing in $k$. Recall that 
$\mathscr{B}_2(k,n)=2\left(\frac{(2n-k-4)(k+4)}{(n-k-4)^2}+\frac{2k+3}{2n-2k-3}\right)$. The second summand 
is clearly increasing, with $k$, as the denominator is decreasing while the numerator is increasing. The first summand 
likewise has a decreasing denominator, and the numerator, $(2n - (k+4))(k+4)$ is quadratic in $k$ with negative concavity. 
Since the critical point of this quadratic occurs at $k=n-4$ (which is also the largest $k$ for which the bound holds), we see that 
the numerator of the first summand is also increasing.

We can now establish a larger bound by replacing $\mathscr{B}_2(2,n)$ and $\mathscr{B}_2(3,n)$ 
with $\mathscr{B}_2(3,n)$. We then obtain an even larger bound by ignoring the 
$-n(\pi\epsilon_n+\epsilon_n^2)$. 
\end{proof}

Chebyshev's inequality tells us that the probability of a polygon having total curvature $\kappa$ inside the 
range of $[n t_n - \lambda\sqrt{\operatorname{Var}}, nt_n + \lambda\sqrt{\operatorname{Var}}]$ is at least $1-\frac{1}{\lambda^2}$. By 
the above corollary, we can extend this to a slightly larger, but easier to work with interval by replacing 
$\sqrt{\operatorname{Var}}$ with $n\pi\sqrt{\mathscr{B}_2(4,n)}$. This interval is then 
$[n\left( \frac{\pi}{2}+\epsilon_n - \lambda \pi \sqrt{\mathscr{B}_2(4,n)}\right) , n \left( \frac{\pi}{2} +\epsilon_n+ \lambda \sqrt{\mathscr{B}_2(4,n)}\right)]$. We know from \cite{Can2} that $\epsilon_n$ is asymptotic to $\frac{2}{n\pi}$. We also know from Proposition~$\ref{Plane-B-Asymp}$ that $\mathscr{B}_2(4,n)$ is asymptotic to, and less than, $\frac{43}{n}$, so we see that, asymptotically, $\epsilon_n < \mathscr{B}_2(4,n)$. Since we only obtain useful information when $\lambda > 1$, this interval may be 
augmented to $\left[n\pi\left(\frac{\pi}{2}-\frac{7\lambda}{\sqrt{n}}\right), n\pi\left(\frac{\pi}{2}+2\frac{7\lambda}{\sqrt{n}}\right)\right]$. 
Notice here, that the length of the interval is $21\pi\sqrt{n}$. So the length of this 
interval is growing at a rate of $O(\sqrt{n})$.

Of course we already have the trivial bounds that all planar polygons in $Pol_2(n)$ will 
have total curvature between $2\pi$ and $n \pi$, so let us first check that these bounds 
are better than that. By setting $\lambda=\sqrt{2}$, we can say that at most $\frac{1}{2}$ 
of the polygons in $Pol_2(n)$, lie outside our given bounds, and that the lower will be larger 
than $2\pi$ when $n\geq 48$, while the upper will be smaller than $n \pi$ when $n\geq 159$. 
Past those marks, our bounds from variance become more useful than the trivial bounds. 

Before moving on, we would like to point out that this analysis was intended merely as an example, and is in fact adaptable for any essentially bounded $k$-edged locally defined function\footnote{For further example, the interested reader will be able to verify that the arguments leading to Proposition~$\ref{adapt-prop}$ 
and Corollary~$\ref{adapt-cor}$ could be slightly adjusted to say that the variance of the sum of $f$ over all $n$ runs of 
consecutive $k$-edges in a polygon is bounded by the quantity $(n M)^2\mathscr{B}_2(2k,n)$, where $M$ is a bound for $f$ almost everywhere.}.

\section{The Spatial Case} 
\label{sec:spatial-case}

To produce the spatial analogue to Theorem~\ref{plane-B-bound} and Theorem~\ref{main-planar}, we will need an analogue to Theorem~\ref{ortho-df}. In \cite{Diaconis01}, such an adaptation is left to the interested reader, as are many of the tools needed along the way. So as not to interupt the flow, these details have been placed in the Appendix and pick up with these new theorems:

\begin{theorem}\label{my-df}Let $Z$ be the upper left $r\times s$ block of a random matrix $U$ which is uniform on $U(n)$, 
so that it has density given by Theorem~$\ref{dist}$. Further, we have that $EZ=O\in\mathscr{M}_{r,s}(\mathbb{C})$ and 

\noindent$\operatorname{Cov}(Z)=n^{-1}I_r\otimes I_s$, so we shall take $X$ to be a random matrix with the 
$r\times s$ complex multivariate Gaussian distribution with the same mean and covariace. Then, provided that $r+s+2<n$, 
the variation distance between $\mathscr{L}(Z)$ and $\mathscr{L}(X)$ is 
bounded above by $B(r,s;n):=2\left(\left(1-\frac{r+s}{n}\right)^{-t^2}-1\right)$, where $t=\min(r,s)$.
\end{theorem}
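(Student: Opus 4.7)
The plan is to follow the Diaconis--Freedman strategy used in the proof of Theorem~\ref{ortho-df}, transposed from the real orthogonal group to the complex unitary group. The proof rests on having an explicit density for the upper $r\times s$ block $Z$ of a Haar-distributed $U\in U(n)$; in the complex case this density (supplied by Theorem~\ref{dist}) takes the form $f_Z(W)=c_{n,r,s}\det(I_s-W^*W)^{n-r-s}$ on the matrix ball $\mathcal{D}=\{W:I_s-W^*W>0\}$, with a normalizing constant $c_{n,r,s}$ expressible through complex matrix Beta integrals. Against this I would compare the complex multivariate Gaussian density $f_X(W)=(n/\pi)^{rs}\exp(-n\operatorname{tr}(W^*W))$, which has the prescribed mean and covariance $n^{-1}I_r\otimes I_s$.

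First I would write $2\|\mathscr{L}(Z)-\mathscr{L}(X)\|_{TV}=\int|f_Z-f_X|\,dW$ and split the integral into the matrix ball $\mathcal{D}$, where both densities are positive, and its complement $\mathcal{D}^c$, where only $f_X$ contributes. On $\mathcal{D}$ the ratio $f_Z/f_X$ factors as the ratio of normalizing constants multiplied by $\det(I_s-W^*W)^{n-r-s}\exp(n\operatorname{tr}(W^*W))$; the elementary eigenvalue inequality $\det(I-A)\le e^{-\operatorname{tr}(A)}$ valid for Hermitian $A$ with spectrum in $[0,1]$ shows that this second factor is at most $\exp((r+s)\operatorname{tr}(W^*W))$, so the pointwise density ratio is uniformly controlled by the ratio of normalizing constants times a Gaussian with shifted parameter $n-r-s$ in place of $n$.

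Next I would evaluate the ratio of normalizing constants explicitly. The complex matrix Beta integral $\int_{\mathcal{D}}\det(I_s-W^*W)^{\alpha}\,dW$ reduces to a product of Gamma functions, and after Stirling-type estimates of the form $\Gamma(x+a)/\Gamma(x)\sim x^a$ the ratio $c_{n,r,s}/(n/\pi)^{rs}$ collapses to a product that is bounded above by $(1-(r+s)/n)^{-t^2}$ with $t=\min(r,s)$. The exponent $t^2$, rather than the $t^2/2$ of the real case, reflects the full rather than half-integer exponent of the determinant in the complex density; the disappearance of the additive $+2$ (compared with the $r+s+2$ of Theorem~\ref{ortho-df}) similarly traces back to the shifted exponent of the complex matrix ball integrand.

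The main obstacle will be the careful Gamma-function bookkeeping needed to collapse the product of ratios into a single factor of the stated form, together with verifying that the Gaussian tail mass $P_X(\mathcal{D}^c)$ is of the same or smaller order as the bulk deviation $(1-(r+s)/n)^{-t^2}-1$ so that it can be absorbed into the bound without enlarging it. Once the density ratio is bounded and the tail is controlled, applying the triangle inequality to the split integral and collecting terms yields $2\|\mathscr{L}(Z)-\mathscr{L}(X)\|_{TV}\le 2\bigl((1-(r+s)/n)^{-t^2}-1\bigr)=B(r,s;n)$ as asserted.
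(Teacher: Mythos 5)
Your decomposition of the total-variation integral over the matrix ball $\mathcal{D}$ and its complement is not what the paper does, and as stated there is a real gap in the bulk estimate. The eigenvalue inequality $\det(I-A)\leq e^{-\operatorname{tr}(A)}$ bounds the density ratio $f_Z(W)/f_X(W)$ only by $C\,e^{(r+s)\operatorname{tr}(W^\ast W)}$, where $C$ is the ratio of normalizing constants; this is not uniform in $W$. Over $\mathcal{D}$ one has $\operatorname{tr}(W^\ast W)<s$, so the best pointwise bound you can extract this way is $C\,e^{s(r+s)}$. But $C<1$ and $C\to 1$ as $n\to\infty$, so this quantity converges to $e^{s(r+s)}>1$ and does not approach $1$. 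The resulting estimate on $\sup(f_Z/f_X-1)$ stays bounded away from zero, whereas the target $B(r,s;n)=2\left(\left(1-\tfrac{r+s}{n}\right)^{-t^2}-1\right)\to 0$. The inequality is simply too lossy: it is weakest near the boundary of $\mathcal{D}$, where $\det(I_s-W^\ast W)\to 0$ while $e^{-\operatorname{tr}(W^\ast W)}$ stays of order $e^{-s}$, and a global supremum over $\mathcal{D}$ (or even over the set where $f_Z>f_X$) is forced to absorb that slack.

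The paper's argument avoids this by a different mechanism. It first reduces to the Wishart-type variables $z^\ast z$ and $x^\ast x$ (legitimate because both densities are functions of those alone), then uses the identity $\int|g-f|=2\int_{\{g>f\}}(g/f-1)f\leq 2\sup_{\{g>f\}}(g/f-1)$, and — crucially — computes the supremum exactly by differentiating $g/f$. Using cofactor identities for $\partial\det$ and the concavity of $\log\det$, it shows the unique interior critical point is $v=\tfrac{r+s}{n}I_s$, where $\operatorname{tr}(v)=\tfrac{s(r+s)}{n}\to 0$; at that point the determinant and exponential factors almost exactly cancel, leaving a Gamma-ratio product. That product is then controlled not by Stirling asymptotics but by an exact telescoping argument built on the convexity of $-\ln(1-x)$, which is what yields the clean exponent $t^2$. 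Note finally that the paper's formulation makes your tail estimate $P_X(\mathcal{D}^c)$ unnecessary: outside $\mathcal{D}$ one has $f_Z=0\leq f_X$, so that region never lies in $\{f_Z>f_X\}$ and contributes nothing to the supremum. The crucial missing step in your plan is the identification and use of the interior maximizer; without it the eigenvalue inequality cannot produce a bound of the right order.
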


\begin{theorem}\label{main-space} Let $f$ be an essentially bounded $k$-edged locally defined function. 
Then the expectation of $f$ over $\operatorname{Pol}_3(n)$ may be approximated by 
the expectation of $f$ over $\operatorname{Arm}_3(n)$ to within $M \mathscr{B}_3(k,n)$, where $M$ is a bound for $f$ 
almost everywhere, and

$$\mathscr{B}_3(k,n):=B(k,2;n) = 2 \left(\frac{4k+3}{4n-4k-3}+\frac{n^4}{(n-k-2)^4}-1\right)$$
\end{theorem}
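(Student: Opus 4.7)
The plan is to imitate the proof of Theorem \ref{main-planar} line by line, using Theorem \ref{my-df} in place of Theorem \ref{ortho-df} and a sphere of the appropriate (quadrupled) dimension in Theorem \ref{sphere-df}. The first task is to establish the spatial counterpart of Theorem \ref{plane-B-bound}: if $P_3(k,n)$ and $A_3(k,n)$ denote the laws of the first $k$ edges of a random $\operatorname{Pol}_3(n)$-polygon and of a random $\operatorname{Arm}_3(n)$-arm respectively, then
\[ \|P_3(k,n) - A_3(k,n)\|_{TV} \leq \mathscr{B}_3(k,n). \]

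Using the identifications from Section \ref{sec:introduction}, the first $k$ edges of a $\operatorname{Pol}_3(n)$-polygon are the coordinate-wise Hopf images of $k$ quaternions encoded as the $k\times 2$ upper-left complex block $Z$ of a Haar-uniform $U(n)$ matrix (via $V_2(\mathbb{C}^n) \hookrightarrow U(n)$ under $\vec a + \mathbf{j}\vec b$). The first $k$ edges of an $\operatorname{Arm}_3(n)$-arm are the Hopf images of the first $4k$ real coordinates of a Haar-uniform point on $S^{4n-1}(\sqrt 2)$. Since the coordinate-wise Hopf map is a deterministic measurable map, the data processing inequality for total variation shows that the distance between the post-Hopf laws is bounded by the distance between the pre-Hopf laws, so it suffices to bound the latter.

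For that bound I introduce an intermediate Gaussian $X \sim N\!\left(\mathbf 0,\tfrac{1}{2n} I_{4k}\right)$. Viewed as a complex $k\times 2$ matrix, $X$ is precisely the $\mathbb{C}$-multivariate Gaussian of mean $\mathbf 0$ and covariance $n^{-1}I_k\otimes I_2$ appearing in Theorem \ref{my-df}. Viewed as $4k$ independent real coordinates, $X$ is the normal distribution $P^{4k}_{\sqrt 2/\sqrt{4n}}$ produced by Theorem \ref{sphere-df} applied with the substitutions $n\mapsto 4n$, $k\mapsto 4k$, $r\mapsto\sqrt 2$: both views describe the same measure, since each complex entry of variance $1/n$ splits into a pair of real entries of variance $1/(2n)$. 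The triangle inequality together with these two theorems then yields
\[ \|P_3(k,n) - A_3(k,n)\|_{TV} \;\leq\; B(k,2;n) \;+\; 2\,\frac{4k+3}{4n-4k-3}, \]
and substituting $B(k,2;n) = 2\bigl(n^4/(n-k-2)^4 - 1\bigr)$ gives exactly $\mathscr{B}_3(k,n)$.

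With this total variation bound in hand, the rest of the argument copies the proof of Theorem \ref{main-planar} verbatim. Since $f$ is $k$-edged locally defined, I integrate out the last $n-k$ edges of each symmetric measure to express $E_{\operatorname{Pol}_3(n)}(f)$ and $E_{\operatorname{Arm}_3(n)}(f)$ as integrals against the marginal laws $\nu_n^k$ and $\mu_n^k$ of the first $k$ edges, and then
\[ |E_{\operatorname{Pol}_3(n)}(f) - E_{\operatorname{Arm}_3(n)}(f)| \;=\; \Big|\!\int f\, (\nu_n^k - \mu_n^k)\Big| \;\leq\; \|f\|_\infty\,\|\nu_n^k - \mu_n^k\|_{TV} \;\leq\; M\,\mathscr{B}_3(k,n). \]
The main obstacle is the first step: correctly matching the covariance of the complex $k\times 2$ Gaussian from Theorem \ref{my-df} with the covariance of the real Gaussian from Theorem \ref{sphere-df}, so that the same $X$ can serve as the intermediate distribution for both legs of the triangle inequality. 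Once that dual identification is clear, the remainder is bookkeeping.
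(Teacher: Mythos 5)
Your proof is correct and follows the same route the paper sketches in its combined proof of Theorems~\ref{my-df}--\ref{main-space-int} and Corollary~\ref{space-cor}: replace $V_2(\mathbb{R}^n)$ by $V_2(\mathbb{C}^n)$ and $S^{2n-1}$ by $S^{4n-1}$, mediate through a common Gaussian using Theorems~\ref{my-df} and~\ref{sphere-df}, then repeat the integration argument of Theorem~\ref{main-planar}. You actually supply details the paper elides -- explicitly invoking the data processing inequality for the coordinate-wise Hopf pushforward and verifying that the complex $k\times 2$ Gaussian with covariance $n^{-1}I_k\otimes I_2$ coincides with the real $N(\mathbf{0},\tfrac{1}{2n}I_{4k})$ from the sphere side -- so nothing is missing.
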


\begin{theorem}\label{main-space-int} Let $f$ be an essentially bounded, $k$-edged 
locally defined function. Then the expectation of $f$ over $\operatorname{Pol}_2(n)$ may be approximated 
by the expectation of $f$ over $\operatorname{Arm}_2(n)$ to within $M \mathscr{B}_2(k,n)$, where $M$ is 
a bound for $f$ almost everywhere.
\end{theorem}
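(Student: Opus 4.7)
The statement is identical in form to Theorem \ref{main-planar}, and the plan is to follow that proof step for step. (Note: the spatial analogue with $3$ in place of $2$ is already subsumed by Theorem \ref{main-space}, so in either reading the argument below supplies the proof.)

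First I would express both expectations as integrals against the symmetric measures: $E_{\operatorname{Pol}_2(n)}(f) = \int f\, d\nu_P$ and $E_{\operatorname{Arm}_2(n)}(f) = \int f\, d\mu_A$, where $\nu_P$ and $\mu_A$ denote the symmetric measures on $\operatorname{Pol}_2(n)$ and $\operatorname{Arm}_2(n)$. Since $f$ is $k$-edged locally defined, integrating out the last $n-k$ edge vectors reduces each of these to an integral on $\mathscr{A}_2(k)$ against the pushforward laws $\nu_n^k$ and $\mu_n^k$ of the first $k$ edges under the two symmetric measures, exactly as in the proof of Theorem \ref{main-planar}.

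Next I would subtract, apply linearity, and use the standard bound of the integral of a bounded function against a signed measure by its essential supremum times total variation:
\begin{align*}
|E_{\operatorname{Pol}_2(n)}(f) - E_{\operatorname{Arm}_2(n)}(f)|
&= \Bigl|\int f\, d(\nu_n^k - \mu_n^k)\Bigr| \\
&\leq \|f\|_\infty\, \|\nu_n^k - \mu_n^k\|_{TV} \\
&\leq M\, \mathscr{B}_2(k,n),
\end{align*}
with the last step supplied by Theorem \ref{plane-B-bound}.

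The only subtle point, not really an obstacle, is verifying that the bound $\mathscr{B}_2(k,n)$ on the total variation at the level of the covering objects in $\mathbb{R}^{2n}$ (the sphere $S^{2n-1}(\sqrt{2})$ versus the Stiefel manifold $V_2(\mathbb{R}^n)$) descends to the same bound on the pushforward laws $\nu_n^k$ and $\mu_n^k$ of the first $k$ edges under the squaring map $S$. This is automatic from the elementary fact that pushforwards under measurable maps are non-expansive in total variation, so no estimate beyond what is already in Theorem \ref{plane-B-bound} is required.
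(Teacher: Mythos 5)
Your proposal is correct and follows essentially the same argument as the paper: reduce to the laws $\nu_n^k$ and $\mu_n^k$ of the first $k$ edges, pass the difference of expectations through $\|f\|_\infty \|\nu_n^k - \mu_n^k\|_{TV}$, and invoke Theorem~\ref{plane-B-bound}, with the pushforward non-expansion remark filling in a detail the paper leaves implicit. You are also right that the statement as printed is a verbatim repeat of Theorem~\ref{main-planar} (the label \texttt{main-space-int} suggests a spatial/intermediate version was intended), so the paper's own treatment simply refers back to the planar proof.
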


\begin{corollary}\label{space-cor} Let $q$ be an essentially bounded, locally measured quantity of a 
polygonal chain. Let $E_p(n)$ stand for the expectation of $q$ over $\operatorname{Pol}_3(n)$, and $E_a(n)$ stand 
for the expectation of $q$ over $\operatorname{Arm}_3(n)$. If $n E_a(n) \rightarrow \infty$, then $\frac{E_p(n)}{E_a(n)}\rightarrow 1$.

Moreover, the expectation of the sum of $q$ over the polygon, $\widetilde{E_p}(n)$ and the 
expectation of the sum of $q$ over the polygon, $\overline{E_p}(n)$ also satisfy 
$\frac{\widetilde{E_p}(n)}{\widetilde{E_a}(n)}\rightarrow 1$ and $\frac{\overline{E_p}(n)}{\overline{E_a}(n)}\rightarrow 1$.
\end{corollary}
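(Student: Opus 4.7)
The plan is to mirror the proof of Corollary~\ref{main-planar-cor} almost verbatim, substituting the spatial bound $\mathscr{B}_3(k,n)$ from Theorem~\ref{main-space} for the planar $\mathscr{B}_2(k,n)$. Since $q$ is essentially bounded by some $M$ and locally defined on $k$ edges, Theorem~\ref{main-space} gives the sandwich
\[
E_a(n) - M\,\mathscr{B}_3(k,n)\;\leq\; E_p(n)\;\leq\; E_a(n) + M\,\mathscr{B}_3(k,n),
\]
and dividing by $E_a(n)$ reduces everything to showing that $\mathscr{B}_3(k,n)/E_a(n)\to 0$.

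The one genuinely new ingredient needed is a spatial analogue of Proposition~\ref{Plane-B-Asymp}, namely that for fixed $k$ we have $\mathscr{B}_3(k,n)=O(1/n)$. The first summand $\tfrac{4k+3}{4n-4k-3}$ inside $\mathscr{B}_3(k,n)$ is manifestly $O(1/n)$, and a binomial expansion of the second summand gives
\[
\frac{n^4}{(n-k-2)^4}-1 \;=\; \left(1-\tfrac{k+2}{n}\right)^{-4}-1 \;=\; \frac{4(k+2)}{n}+O(n^{-2}),
\]
so $\mathscr{B}_3(k,n)\sim \tfrac{20k+35}{2n}$. Combined with the hypothesis $nE_a(n)\to\infty$, this forces $M\,\mathscr{B}_3(k,n)/E_a(n)\to 0$, and the Squeeze Theorem delivers $E_p(n)/E_a(n)\to 1$.

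For the sum versions, permutation invariance of the symmetric measure (noted in the introduction) produces $\widetilde{E_p}(n)=nE_p(n)$ and $\widetilde{E_a}(n)=(n-k-1)E_a(n)$, with analogous identities for $\overline{E_p}$ and $\overline{E_a}$ (the statement's parallel notation signals two summation conventions, either of which obeys the same counting identity up to an $O(1)$ correction). The same algebra that produced the ratio bound in Corollary~\ref{main-planar-cor} then yields
\[
\left|\frac{\widetilde{E_p}(n)}{\widetilde{E_a}(n)}-1\right| \;\leq\; \frac{n}{n-k-1}\,M\,\frac{\mathscr{B}_3(k,n)}{E_a(n)},
\]
and since $n/(n-k-1)\to 1$, both ratios tend to $1$ by the Squeeze Theorem.

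The main obstacle is the asymptotic check on $\mathscr{B}_3(k,n)$: the quartic expression $\tfrac{n^4}{(n-k-2)^4}$ looks more menacing than anything in the planar case, so it is worth verifying carefully that the $-1$ cancels the leading $1$ and that the remaining terms are genuinely $O(1/n)$. Once that is in hand, the rest is routine bookkeeping in the exact style of the planar case, and the only delicate point is ensuring that the locally defined hypothesis for $q$ is strong enough to let Theorem~\ref{main-space} be invoked with the uniform constant $M$ across all the $n$ shifted copies needed for the sum statement.
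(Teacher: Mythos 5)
Your proposal matches the paper's proof, which simply says to mirror the planar argument (Theorem~\ref{plane-B-bound}, Theorem~\ref{main-planar}, and Corollary~\ref{main-planar-cor}) with $\mathscr{B}_3$ in place of $\mathscr{B}_2$; you correctly fill in the one nontrivial verification, namely the asymptotic $\mathscr{B}_3(k,n)\sim\tfrac{20k+35}{2n}$, which agrees with the $\lim_{n\to\infty} n\mathscr{B}_3(k,n)=10k+\tfrac{35}{2}$ stated by the paper immediately after the corollary.
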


\begin{proof} The proofs of these mirror those given in for Theorem~$\ref{plane-B-bound}$, Theorem~$\ref{main-planar}$ and 
Corollary~$\ref{main-planar-cor}$, where we replace $V_2(\mathbb{R}^n)$ with 
$V_2(\mathbb{C}^n)$, $S^{2n}$ with $S^{4n}$, and our variation bounds from the shared, 
close-proximity multivariate Gaussian, come from Theorem~$\ref{my-df}$ and Theorem~$\ref{sphere-df}$ respectively.
\end{proof}

Looking at this bound, we see that, as with the planar case, it is limiting to 0 at a rate of $O(n)$. Specifically, for any fixed $k$, we have  $\displaystyle\lim_{n\rightarrow \infty} n \mathscr{B}_3(k,n)=10k+\frac{35}{2}$, and we 
additionally have again find that $\mathscr{B}_3(k,n)$ is strictly less than the asymptotic, provided that the bound is useful 
($\mathscr{B}_2(k,n)$ is greater than 2 for $k>\frac{n}{5}$). We again find that when $k=o(n^p)$ with $0<p<1$, this is 
limiting to 0, and that when $k=\alpha n$, $\mathscr{B}_3(\alpha n,n)$ 
is limiting to an understandable quantity, this time $\frac{2}{1-\alpha}+\frac{2}{(1-\alpha)^4}-2$, which is greater than 1 for $\alpha>0.08235533$. 
In other words, provided that the the number of edges $k$ is less than $8\%$ of $n$, we are able to say that 
the distributions of $k$-edged segments coming from $\operatorname{Arm}_3(n)$ are close enough in total variation to those coming 
from $\operatorname{Pol}_3(n)$ to hope to apply our theorems.

\section{Torsion} 
\label{sec:new-measure}

In \cite{Can2}, we can see that the integral to find the expected total 
curvature with respect to the symmetric measure on $\operatorname{Pol}_3(n)$ is explicitly computed as $E(\kappa;\operatorname{Pol}_3(n),\nu_P)=\frac{\pi}{2}n+\frac{\pi}{4}\frac{2n}{2n-3}$. Let us now then attempt to
solve the problem of finding bounds on the total torsion.

\begin{definition} For a polygon in $\mathbb{R}^3$, we define the torsion angle (sometimes called the 
dihedral angle) at an edge $e_i$ by the following procedure: Let $p_i$ be the plane which is normal to 
$e_i$ at $v_i$. Project edges $e_{i-1}$ and ${e_{i+1}}$ to $p_i$ along $v_i$ to get a 2-edge planar 
polygonal arm in $p_i$ with middle vertex $v_i$. The torsion angle is then defined as the angle between 
these edges, with the convention that we take its value in the range $(-\pi,\pi]$.
\end{definition}

\begin{proposition} The distribution of the torsion angle for arms is the same as the distribution of 
$\pi-\theta$, where $\theta$ is the polar angle in the spherical coordinates of the edges.
\end{proposition}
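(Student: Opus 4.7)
The plan is to reduce this to a direct calculation about i.i.d.\ uniform points on $S^2$. Since both the projection onto the normal plane $p_i$ and the measurement of the angle between planar vectors are scale-invariant, the torsion angle at $e_i$ depends only on the three unit vectors $\hat{e}_{i-1}, \hat{e}_i, \hat{e}_{i+1}$. My first task is therefore to show that under the symmetric measure on $\operatorname{Arm}_3(n)$ these three directions are independent and uniformly distributed on $S^2$.

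To establish this, I would use the polar decomposition $\mathbf{q}_j = r_j \mathbf{u}_j$ with $r_j \geq 0$ and $\mathbf{u}_j \in S^3$ on each quaternion coordinate. Haar measure on $S^{4n-1}(\sqrt{2}) \subset \mathbb{H}^n$ is invariant under the action of $\operatorname{Sp}(1)^n$ by left multiplication on each factor, so conditional on the lengths $r_j$ the unit quaternions $\mathbf{u}_j$ are i.i.d.\ uniform on $S^3$. Because the Hopf map $S^3 \to S^2$ is a Riemannian submersion with $U(1)$-orbit fibers, it pushes the uniform measure on $S^3$ forward to the uniform measure on $S^2$, so the edge directions $\hat{e}_j = \Hopf(\mathbf{u}_j)$ are i.i.d.\ uniform on $S^2$, independently of the lengths.

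With this in hand, I would condition on $\hat{e}_i$ and use rotational invariance to place it at the north pole, so that $p_i$ becomes the equatorial plane. If $(\theta_\pm,\phi_\pm)$ denote the spherical coordinates of $\hat{e}_{i\pm 1}$, then $\phi_\pm$ are independent and uniform on $[0,2\pi)$, and the projection of $\hat{e}_{i\pm 1}$ onto $p_i$ is a nonzero planar vector (almost surely) whose $2$D polar angle in $p_i$ equals $\phi_\pm$. The signed angle between the two projections in the oriented plane $p_i$ therefore equals $\phi_+ - \phi_-$ mod $2\pi$ in $(-\pi,\pi]$, up to the additive $\pi$-shift coming from the turning-vs.-interior-angle convention. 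Since differences modulo $2\pi$ of two independent uniform circular variables are uniform, this is uniform on $(-\pi,\pi]$, and likewise $\pi-\theta$ with $\theta$ the (uniform on $[0,2\pi)$) polar angle of a uniform edge direction is uniform on $(-\pi,\pi]$. The main subtlety will be pinning down conventions---which angular coordinate the proposition means by ``polar angle'' and how the sign of the torsion is induced from the orientation of $p_i$ by $\hat{e}_i$---after which everything reduces to the elementary fact that $\phi_+ - \phi_-$ is uniform on the circle.
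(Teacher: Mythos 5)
Your proof is correct and lands on essentially the same invariance mechanism the paper uses, but it takes a slightly different route in the final step. The paper rotates the configuration so that $e_i$ sits on the $z$-axis \emph{and} $e_{i-1}$ lies in the $xz$-plane with its projection along the negative $x$-axis; in that normalized frame the torsion angle is identically $\pi-\theta_{i+1}$ (with $\theta_{i+1}$ the azimuthal angle of the rotated $e_{i+1}$), and the result then follows from the $SO(3)$-invariance of the law of arms. You instead fix only $\hat e_i$ at the pole, write the torsion as the circular difference $\phi_+-\phi_-$ of the two remaining azimuths, and observe that both this difference and $\pi-\theta$ are uniform on the circle, so the two laws agree. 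Your route is a bit more elementary: it avoids pinning down the exact sign and shift conventions, since you only need that both sides are uniform, whereas the paper's normalization makes the identification exact. The trade-off is that your argument leans explicitly on both the product decomposition of the symmetric measure (directions i.i.d.\ uniform on $S^2$) and the uniformity of circular differences, while the paper only needs to name $SO(3)$-invariance in the last line (the independence of directions, which it too implicitly uses, is cited from \cite{Can1} later in the section). Your derivation of the i.i.d.\ uniform directions via the $\operatorname{Sp}(1)^n$-invariance of the sphere measure and the Hopf fibration is a correct and self-contained way to supply what the paper delegates to \cite{Can1}. The remaining vagueness you flag, \emph{``up to the additive $\pi$-shift,''} is harmless here precisely because the uniform distribution on the circle is translation-invariant, but it would be worth tightening if you wanted the literal identification $\tau_i = \pi-\theta_{i+1}$ that the paper states.
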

\begin{proof}
Write $e_i=(r_i,\theta_i,\phi_i)$ in spherical coordinates. Rotate the configuration so that $e_i$ is on the 
$z$-axis. If we then further rotate so that $e_{i-1}$ has no $y$-component, we can see that the projections 
to $p_i$ (the $xy$-plane) form a planar 2-edge arm that runs along the negative $x$-axis, then turns to form 
an edge given in polar coordinates as $(\tilde{r}_{i+1}, \theta_{i+1})$. As such, the torsion angle of the 
rotated configuration will be given by $\pi-\theta_{i+1}$. Since the distribution of arms is invariant under 
the $SO(3)$ action\footnote{This follows from the $U(n)$-invariance by taking an appropriate block diagonal matix.} on $\mathbb{R}^3$, the result follows.
\end{proof}

\begin{proposition} The expectation of the torsion angle of a polygonal arm sampled under the symmetric 
measure on $\operatorname{Arm}_3(n)$ is $0$.
\end{proposition}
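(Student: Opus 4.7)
The plan is to combine the preceding proposition with the fact that, in the arm case, edge directions are conditionally independent and uniform on $S^2$, so that the identification $T \stackrel{d}{=} \pi - \Theta$ gives a uniform distribution on $(-\pi,\pi]$.

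By the preceding proposition, the torsion angle $T$ at $e_i$ satisfies $T \stackrel{d}{=} \pi - \Theta$, where $\Theta$ is the planar-polar angle of the projection of $e_{i+1}$ onto the plane $p_i$ after the rotation (depending only on $e_i$ and $e_{i-1}$) that puts $e_i$ along the $z$-axis and $e_{i-1}$ in the $xz$-plane. To conclude that $E[T]=0$ it suffices to show that $\Theta$ is uniformly distributed on $[0,2\pi)$, since then $E[\pi - \Theta] = \pi - \pi = 0$.

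Next I would establish that, conditionally on $e_i$, $e_{i-1}$, and the length $|e_{i+1}|$, the direction of $e_{i+1}$ is uniform on $S^2$. This is a direct consequence of the Hopf construction: on $S^{4n-1}(\sqrt{2}) \subset \Q^n$, conditioning on the quaternion norms $|q_1|,\dots,|q_n|$ makes the $q_j$ independent and uniform on their respective spheres $S^3(|q_j|)$, and the Hopf map $q \mapsto \bar q \I q$ pushes the uniform measure on $S^3$ forward to the uniform measure on $S^2$. Hence the edge directions are conditionally independent, each uniform on $S^2$.

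Given this, applying the $(e_i,e_{i-1})$-determined rotation to a uniform direction on $S^2$ yields a uniform direction, whose azimuthal component $\Theta$ is uniform on $[0,2\pi)$. Therefore $E[T] = E[\pi - \Theta] = 0$, as desired. The only real obstacle is articulating the conditional-independence claim precisely from the pushforward construction of the symmetric measure; once it is in hand the computation is immediate. It is worth remarking that the same reasoning actually gives the stronger conclusion that $T$ is uniformly distributed on $(-\pi,\pi]$, a fact which may be convenient when bounding higher moments or the variance of total torsion later on.
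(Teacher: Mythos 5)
Your proof is correct and follows essentially the same route as the paper: both reduce to the observation that edge directions in $\operatorname{Arm}_3(n)$ are (conditionally) independent and uniform on $S^2$, so the polar angle $\theta$ of the preceding proposition is uniform on $[0,2\pi)$ and $E[\pi-\theta]=0$. The paper cites the product-measure structure of the symmetric measure on $\mathbb{R}^n\times(S^2)^n$ as known, while you re-derive it from the Hopf construction — a useful unpacking, but not a different argument.
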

\begin{proof} Similar to how we found the expectation of curvature, since we know that the symmetric 
measure is expressible as a product measure on $\mathbb{R}^n\times\left(S^2\right)^n$, with the spherical 
measure on the individual copies of $S^2$, we see that the distribution of the polar angles will be uniform on 
$[0,2\pi)$, so the expectation of $\pi-\theta$ will be 0.
\end{proof}

For polygons, we have an integral even more imposing than the one for planar polygon's curvature. So this 
is an excellent opportunity to use the total variation 
bound. Using Theorem~$\ref{main-space}$, we see that $\vert E(\tau_i) - 0\vert \leq \pi \mathscr{B}_3(3,n)$. 
This give us bounds on total torsion of $ -n \pi \mathscr{B}_3(3,n) \leq E(\tau) \leq n \pi \mathscr{B}_3(3,n)$. 
This is limiting to the range of $[-55.5\pi,55.5\pi]$. However, unlike total curvature, the expectation of total 
torsion over $\operatorname{Arm}_3(n)$ is 0 for any $n$. As such, $\displaystyle\lim_{n\rightarrow\infty} n E_{\operatorname{Arm}_3(n)}(\tau) = 0$, 
so we may not apply Corollary~$\ref{space-cor}$. Nonetheless, perhaps we may hope to glean useful information 
by considering the variance.

\begin{proposition} Where $\tau_i$ is the torsion angle at edge $e_i$ of a polygon sampled under the symmetric 
measure on $\operatorname{Arm}_3(n)$, we have that $\operatorname{Cov}(\tau_i,\tau_j)=\delta_{i,j}\frac{1}{3}\pi^2$.
\end{proposition}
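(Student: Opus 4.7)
The plan is to exploit two facts already in hand: on $\operatorname{Arm}_3(n)$ the edge directions are i.i.d.\ uniform on $S^2$, and (from the preceding propositions) the torsion $\tau_i$---depending only on the directions of $e_{i-1},e_i,e_{i+1}$---is uniform on $(-\pi,\pi]$. For the diagonal case $i=j$ this already suffices, giving
\[
\operatorname{Cov}(\tau_i,\tau_i) \;=\; E[\tau_i^2] \;=\; \frac{1}{2\pi}\int_{-\pi}^{\pi}x^2\,dx \;=\; \frac{\pi^2}{3}.
\]

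For $i\neq j$, I would first strengthen the preceding proposition to the following conditional form: given the values of \emph{any two} of the three edges $\{e_{i-1},e_i,e_{i+1}\}$, the torsion $\tau_i$ is still uniform on $(-\pi,\pi]$, and in particular has conditional mean zero. The proof is the same rotation argument as before: after rotating so that $e_i$ lies on the positive $z$-axis and further rotating about that axis to fix the second conditioned edge, the third edge (still uniform on $S^2$) enters the torsion only through its azimuth, which is uniform on $[0,2\pi)$.

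I would then split the off-diagonal case by $|i-j|$. If $|i-j|\ge 3$, the edge triples supporting $\tau_i$ and $\tau_j$ are disjoint, so the torsions are independent and $\operatorname{Cov}(\tau_i,\tau_j)=0$. If $|i-j|=2$ (say $j=i+2$), only $e_{i+1}$ is shared; conditioning on $e_{i+1}$ makes $\tau_i$ and $\tau_{i+2}$ conditionally independent functions of $(e_{i-1},e_i)$ and $(e_{i+2},e_{i+3})$, respectively, and iterated expectation gives $E[\tau_i\mid e_{i+1}]=E\bigl[E[\tau_i\mid e_i,e_{i+1}]\bigm| e_{i+1}\bigr]=0$ (and analogously for $\tau_{i+2}$), whence $E[\tau_i\tau_{i+2}]=0$. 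If $|i-j|=1$ (say $j=i+1$), the pair $(e_i,e_{i+1})$ is shared; conditioning on it makes $\tau_i$ and $\tau_{i+1}$ conditionally independent functions of the disjoint edges $e_{i-1}$ and $e_{i+2}$, and the lemma applied to each torsion gives $E[\tau_i\mid e_i,e_{i+1}]=E[\tau_{i+1}\mid e_i,e_{i+1}]=0$, so $E[\tau_i\tau_{i+1}]=0$. The main obstacle is the conditional-uniformity lemma, but it is a routine extension of the rotation argument already used in the preceding proof, so no genuinely new difficulty arises beyond careful bookkeeping of which azimuth is being integrated in each case.
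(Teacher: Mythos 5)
Your proof is correct and follows the same basic strategy as the paper: compute $\operatorname{Var}(\tau_i)$ from the marginal uniformity of $\tau_i$ on $(-\pi,\pi]$, and reduce the off-diagonal terms to zero via independence of edge directions. However, you are considerably more careful than the paper in the off-diagonal case. The paper merely asserts that ``we can easily see from the independence of directions and our earlier description of the dihedral angle, that the covariance of any distinct pair of dihedral angles will be $0$,'' without addressing that $\tau_i$ and $\tau_{i\pm 1}$ (resp.\ $\tau_{i\pm 2}$) are \emph{not} independent random variables, since they share two (resp.\ one) of their three supporting edges. Your conditional-uniformity lemma together with the case split on $|i-j|$ and iterated expectation is exactly the right way to make that step rigorous. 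One caveat: as written your lemma is too strong. Conditional uniformity given \emph{any} two of $\{e_{i-1},e_i,e_{i+1}\}$ fails when the conditioned pair is $\{e_{i-1},e_{i+1}\}$ --- the rotation argument needs the axial edge $e_i$ among the conditioned edges to set up the frame, and in fact the conditional law of $\tau_i$ given $e_{i-1},e_{i+1}$ is generally not uniform (for instance, if $e_{i-1}$ and $e_{i+1}$ are parallel then $\tau_i=0$ identically, regardless of $e_i$). Fortunately your subsequent case analysis only ever conditions on pairs containing $e_i$, so the argument goes through once the lemma is restricted to those two pairs.
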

\begin{proof}
We can easily see from the independence of directions and our earlier description of the dihedral angle, that the 
covariance of any distinct pair of dihedral angles will be 0. So let us focus on $\operatorname{Cov}(\tau_i,\tau_i)$ where we have:

\begin{align}
\operatorname{Cov}(\tau_i,\tau_i)&=\operatorname{Var}(\tau_i)\\
&=\int_{\operatorname{Arm}_3(n)}\!\left(\tau_i-0\right)^2\,\mathrm{d}\sigma\\
&=\frac{1}{2\pi}\int_0^{2\pi}\! (\pi-\theta_i)^2\,\mathrm{d}\theta_i\\
&=\left. \frac{-1}{6\pi}(\pi-\theta_i)^3\right\vert_{\theta_i=0}^{\theta_i=2\pi}\\
&=\frac{1}{6\pi}(2\pi^3)\\
&=\frac{1}{3}\pi^3.\end{align}
\end{proof}

For an explicit example, notice that this means the variance of total torsion for an open polygonal arm is 
$\frac{n}{3}\pi^2$, which pairs with Chebyshev's inequality to tell us that we should expect less than 
one-third of all open polygonal arms to have total torsion with absolute value greater than $\pi\sqrt{n}$. 

\begin{proposition}\label{tor-bound} Where $\tau_i$ is the torsion angle at edge $e_i$ of a closed polygon sampled under 
the symmetric measure on $Pol_3(n)$, we have that the variance of total torsion 
$\tau=\sum_{i=1}^n \tau_i$ is bounded above by $\frac{n}{3}\pi^2 + n^2\pi^2\mathscr{B}_3(6,n)$. 
\end{proposition}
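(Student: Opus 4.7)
The plan is to mirror the structure of Proposition~\ref{adapt-prop}, with two key changes: (i) since $\operatorname{Var}(\tau) \leq E[\tau^2]$, we may bypass the centering step and work directly with $E[\tau^2] = \sum_{i,j} E[\tau_i \tau_j]$; and (ii) each dihedral $\tau_i$ depends on the three consecutive edges $e_{i-1}, e_i, e_{i+1}$, so the expansion extends one category further than in the planar curvature case.

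First, I would partition the ordered pairs $(\tau_i, \tau_j)$ according to circular distance $d = (j - i) \bmod n$: $d = 0$ gives $n$ diagonal pairs (depending on $3$ edges); $d \in \{1, n-1\}$ gives $2n$ pairs ($4$ consecutive edges); $d \in \{2, n-2\}$ gives $2n$ pairs ($5$ consecutive edges); and $d \in \{3, \ldots, n-3\}$ gives the remaining $n(n-5)$ pairs. In this last category the edge supports of $\tau_i$ and $\tau_j$ are disjoint, but by the permutation invariance of the symmetric measure we may relabel the six relevant edges as the first six, so that $E[\tau_i \tau_j]$ is still the integral of a $6$-edge locally defined function. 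By symmetry all pairs within a category share a common expected product, yielding
\[
E[\tau^2] = n E[\tau_1^2] + 2n E[\tau_1 \tau_2] + 2n E[\tau_1 \tau_3] + n(n-5) E[\tau_1 \tau_4].
\]

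Next I would compute the reference values over $\operatorname{Arm}_3(n)$. Because the edges there are i.i.d.\ uniform on $S^2$, conditioning on the frame adapted to the neighbors of $\tau_i$ shows that the azimuthal coordinate determining $\tau_i$ is uniform on $[0, 2\pi)$ and independent of the prior edges; hence each $\tau_i$ is uniform on $(-\pi, \pi]$ and the family $\{\tau_i\}$ is mutually independent. Thus $E_{\operatorname{Arm}_3(n)}[\tau_1^2] = \pi^2/3$ and $E_{\operatorname{Arm}_3(n)}[\tau_1 \tau_{1+\ell}] = 0$ for $\ell \geq 1$. Each product $\tau_i \tau_j$ is essentially bounded by $\pi^2$, so Theorem~\ref{main-space}, applied with $k = 3, 4, 5, 6$ respectively, gives
\[
E_{\operatorname{Pol}_3(n)}[\tau_1^2] \leq \tfrac{\pi^2}{3} + \pi^2 \mathscr{B}_3(3,n), \qquad |E_{\operatorname{Pol}_3(n)}[\tau_1 \tau_{1+\ell}]| \leq \pi^2 \mathscr{B}_3(3+\ell, n)
\]
for $\ell = 1, 2, 3$. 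Substituting these bounds and invoking the monotonicity of $\mathscr{B}_3(\cdot, n)$ in $k$---both summands in the defining formula have increasing numerators over decreasing denominators, so the same inspection as in the planar case applies---consolidates all error contributions into $(n + 2n + 2n + n(n-5))\pi^2 \mathscr{B}_3(6,n) = n^2 \pi^2 \mathscr{B}_3(6,n)$, leaving the desired $\tfrac{n}{3}\pi^2 + n^2 \pi^2 \mathscr{B}_3(6,n)$.

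The main obstacle I anticipate is the category-4 bookkeeping: one must be careful to justify that, although $\tau_i$ and $\tau_j$ sit at non-adjacent positions around the polygon, the full permutation invariance of the symmetric measure lets us treat $E[\tau_i \tau_j]$ as the integral of a $6$-edge locally defined function so that Theorem~\ref{main-space} applies with $k=6$. Once this point is granted, the remaining pieces---the independence of dihedrals over $\operatorname{Arm}_3(n)$, the exact value $E_{\operatorname{Arm}_3(n)}[\tau_1^2] = \pi^2/3$, and the monotonicity of $\mathscr{B}_3$---are routine.
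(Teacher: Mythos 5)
Your proof is correct and follows essentially the same route as the paper: partition the pairs $(\tau_i,\tau_j)$ by circular distance into four categories of sizes $n$, $2n$, $2n$, $n(n-5)$, identify each $E[\tau_i\tau_j]$ as the integral of a $k$-edge locally defined function for $k=3,4,5,6$, compare against the arm values $\pi^2/3$ and $0$ via Theorem~\ref{main-space}, and absorb everything using the monotonicity of $\mathscr{B}_3(k,n)$ in $k$. The only (cosmetic) difference is that you drop the centering term $n^2 E[\tau_1]^2$ up front via $\operatorname{Var}(\tau)\leq E[\tau^2]$, where the paper carries the covariances through and discards that nonnegative term at the end; you also make explicit the permutation-invariance argument justifying that all ``far'' pairs $(\tau_i,\tau_j)$ can be treated as $6$-edge locally defined, a point the paper leaves implicit.
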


\begin{proof} The $U(n)$ invariance will again suggest that we should partition the pairs of torsion angles 
into: (A) $(\tau_i,\tau_i)$, (B) $(\tau_i,\tau_{i\pm 1})$, (C) $(\tau_i,\tau_{i\pm 2})$ and (D) all others. Within 
these categories, those in (A) have covariance equal to $\operatorname{Cov}_p(\tau_1,\tau_1)$, those in (B) will match 
$\operatorname{Cov}_p(\tau_1,\tau_2)$, those in (C) will match $\operatorname{Cov}_p(\tau_1,\tau_3)$ and those in (D) will match 
$\operatorname{Cov}_p(\tau_1,\tau_4)$. Breaking the variance apart, we have:
\begin{align}
\operatorname{Var}\left(\sum_{i=1}^n \tau_i\right) &= \sum_{i=1}^n\sum_{j=1}^n \operatorname{Cov}_p(\tau_i,\tau_j) \\
&= n \operatorname{Cov}_p(\tau_1,\tau_1) + 2n \operatorname{Cov}_p(\tau_1,\tau_2) + 2n \operatorname{Cov}_p(\tau_1,\tau_3) + (n^2-5n) \operatorname{Cov}_p(\tau_1,\tau_4)\\
&= n E_p(\tau_1^2) + 2n E_p(\tau_1 \tau_2) + 2n E_p(\tau_1 \tau_3) + (n^2-5n) E_p(\tau_1\tau_4)-n^2 E[\tau_1]^2.\end{align}

Here, we see that both $E_p(\tau_1)$ and $E_p(\tau_1^2)$ are obtained as the integral of an essentially 
bounded $3$-edge locally determined function, and similarly we need 4 edges for $E_p(\tau_1 \tau_2)$, 5 
for $E_p(\tau_1 \tau_3)$ and 6 for $E_p(\tau_1 \tau_4)$. We have seen that, over arms, $E(\tau_i,\tau_j)=\delta_{i,j}\frac{1}{3}\pi^2$, 
so we may bound $E_p(\tau_1^2)\leq \frac{1}{3}\pi^2 + \pi^2 \mathscr{B}_3(6,n)$, and 
$E_p(\tau_i \tau_j) \leq 0 + \mathscr{B}_3(6,n)$ for $i<j$, by using the fact that, for fixed $n$, $\mathscr{B}_3(k,n)$ 
is an increasing function of $k$. To see this fact, recall that we have 
$\mathscr{B}_3(k,n)=2 \left(\frac{4k+3}{4n-4k-3}+\frac{n^4}{(n-k-3)^4}-1\right),$ written as the sum of three quantities, only the 
first two of which depend on $k$. In the sum, the first summand has an increasing numerator and decreasing denominator as $k$ increases, while 
the second has constant numerator and decreasing denominator. This shows us that $B_3(k,n)$ is increasing in $k$ (within its domain). 
This leaves us with:
\begin{align}
\operatorname{Var}\left(\sum_{i=1}^n \tau_i\right) &= n E_p(\tau_1^2) + 2n E_p(\tau_1 \tau_2) + 2n E_p(\tau_1 \tau_3) + (n^2-5n) E_p(\tau_1\tau_4)-n^2 E[\tau_1]^2\\
&\leq \frac{n}{3}\pi^2 + n^2\pi^2\mathscr{B}_3(6,n)-n^2E_p[\tau_1^2]\\
&\leq \frac{n}{3}\pi^2 + n^2\pi^2\mathscr{B}_3(6,n)\end{align}
\end{proof}

This bound is asymptotically bounded by $86\pi^2 n$. At the moment, all we can say using only Chebyshev's Inequality 
and our earlier observation about the bounds on expected total 
torsion, is that, for large $n$, we expect that at least $\left(1-\frac{1}{\lambda^2}\right)100\%$ of 
polygons in $Pol_3(n)$ have total torsion in the range of 
$\pm \pi \left(55.5 + \lambda \sqrt{86n}\right)$. While not seemingly very impressive, it is better than the trivial bounds on total torsion in the case of $\lambda=\sqrt{2}$ for $n>272$. That being said, we unfortunately find that $\pi^2n^2\mathscr{B}_3(6,n)>55.5\pi$ for $n>8$, which makes this bound simply too high to be used profitably with 
Chebyshev's Inequality.



\section{Future Directions}
From looking at the distribution function for sub-arms (\cite{my-paper} and \cite{Can2}), it is clear that many expectations are significantly easier to explicitly compute for $\operatorname{Arm}_d(n)$ opposed to $\operatorname{Pol}_d(n)$. Additionally, in some of these theorems (e.g. Theorem~\ref{my-df}), the sharpness of the bounds listed is unknown. In particular, a number of numerical experiments hint that it is the case that there may indeed be room for some improvement, so this is definitely a topic for further investigation.

\section{Acknowledgements}
The author is happy to acknowledge the contributions of the many friends and colleagues who provided numerous helpful discussions on polygons and probability theory, especially Edward Azoff, Jason Cantarella, Harrison Chapman, Tom Needham, and Clayton Shonkwiler.

\begin{appendices}
\section{Technical Proofs}\label{sec:appendix}

Throughout this section we will 
let $\mathscr{L}(\ast)$ denote ``the law of $\ast$," as is the convention in many of the references.

\begin{definition} Given a subspace $M$ of $\mathbb{C}^n$, the compact subgroup $U_n(M)\subset U(n)$ 
is defined by $U_n(M)=\{g\in U(n) \vert g x = x\,\text{for all }x\in M\}$.
\end{definition}

\begin{definition} Since $U_n(M)$ is compact, we may pushforward the Haar measure on $U(n)$ to $U_n(M)$ 
and then normalize this pushforward to produce a measure $\nu_M$ on $U_n(M)$.
\end{definition}

\begin{definition} We say that $U$ is uniform on $U_n(M)$ if it is a 
random element with law $\nu_M$.
\end{definition}

\begin{definition} Let $P$ be the orthogonal projection onto the 
$m$-dimensional subspace $M\subset\mathbb{C}^n$ and 
set $Q=I-P$ to be the orthogonal projection onto $M^\perp$. Let $r$ 
be no larger than $n-m$ and let $\alpha$ be a complex matrix of size 
$r\times n$. Define $A(M,\alpha)=\alpha Q \alpha^\ast$. Further, since $Q$ is 
Hermitian, we see that $A(M,\alpha)$ will be Hermitian. The Spectral Theorem  
then tells us that there exists a unitary matrix $U_A$ and a real diagonal matrix $D$ such that $A(M,\alpha)=U^\ast D U$. 
Since $A(M,\alpha)$ is positive semi-definite, we know that all elements of $D$ are non-negative, so it makes since to 
define the matrix $D^{1/2}$ to be the matrix whose $(i,j)-$entry is the non-negative square root of the $(i,j)-$entry of $D$. We then 
define 
$A^{1/2}(M,\alpha):=U^\ast D^{1/2} U$. In particular, note that 
$$A^{1/2}(M,\alpha)A^{1/2}(M,\alpha)=U^\ast D^{1/2} U U^\ast D^{1/2} U=U^\ast D^{1/2} I D^{1/2} U = U^\ast D U = A.$$
\end{definition}


\begin{lemma} Fix an $m$-dimensional subspace $M\subset \mathbb{C}^n$, and let $P$ be the 
projection matrix for $M$. Let $U$ be uniformly distributed on $U(n-m)$ and let $Z$ be 
the upper left $r\times s$ corner block of $U$. Let $\alpha$ be an $r\times n$ complex matrix 
and let $\beta$ be an $s\times n$ complex matrix, where $r$ and $s$ are no larger 
than $n-m$. For $A=A(M,\alpha)$, $B=A(M,\beta)$, and the variate $V=\alpha U \beta^\ast$, we have 
$\mathscr{L}(V)=\mathscr{L}(A^{1/2} Z B^{1/2} + \alpha P \beta^\ast).$

\end{lemma}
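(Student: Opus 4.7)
The plan is to decompose $\alpha U \beta^{\ast}$ into a deterministic part coming from $M$ and a random part living on $M^{\perp}$, then use polar decomposition plus invariance of Haar measure to identify the random part with $A^{1/2} Z B^{1/2}$.

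First I would observe the block structure of $U$. Since $Ux = x$ for every $x \in M$, we have $UP = P$, and since $U^{\ast}$ also fixes $M$ pointwise, $PU = P$ as well. Writing $I = P + Q$ and multiplying out, this forces $U = P + QUQ$, i.e.\ with respect to the decomposition $\mathbb{C}^n = M \oplus M^{\perp}$, the matrix $U$ has the block form $\mathrm{diag}(I_m, U')$, where $U'$ is Haar-uniform on $U(M^{\perp}) \cong U(n-m)$. Substituting yields
\[
\alpha U \beta^{\ast} = \alpha P \beta^{\ast} + (\alpha Q)\, U\, (\beta Q)^{\ast},
\]
so the deterministic translate $\alpha P \beta^{\ast}$ is already accounted for and the problem reduces to determining the law of $(\alpha Q) U (\beta Q)^{\ast}$.

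Next I would apply polar decomposition (on the generic, full-rank stratum) to the matrices $\alpha Q$ and $\beta Q$. Since $A = (\alpha Q)(\alpha Q)^{\ast}$ and $B = (\beta Q)(\beta Q)^{\ast}$ are positive semi-definite, when they are invertible we can set $V := A^{-1/2}\alpha Q$ and $W := B^{-1/2}\beta Q$; these are $r\times n$ and $s\times n$ matrices whose rows are orthonormal vectors lying in $M^{\perp}$, and $\alpha Q = A^{1/2}V$, $\beta Q = B^{1/2}W$. Hence
\[
(\alpha Q)\, U\, (\beta Q)^{\ast} = A^{1/2}\, V U W^{\ast}\, B^{1/2},
\]
and since the rows of $V,W$ lie in $M^{\perp}$ we can replace $U$ by $U'$ acting on $M^{\perp}$.

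The key distributional step is to show that $V U' W^{\ast}$ has the same law as $Z$, the upper-left $r\times s$ block of a Haar-uniform element of $U(n-m)$. For this I would extend the orthonormal rows of $V$ to a full unitary $\tilde V \in U(n-m)$ (with $V$ as the first $r$ rows), and likewise extend $W$ to $\tilde W$. Bi-invariance of Haar measure on $U(n-m)$ gives that $\tilde V U' \tilde W^{\ast}$ is again Haar-uniform on $U(n-m)$; its upper-left $r\times s$ block is precisely $V U' W^{\ast}$, which therefore has the law of $Z$. Combining everything yields $\mathscr{L}(V) = \mathscr{L}(A^{1/2} Z B^{1/2} + \alpha P \beta^{\ast})$.

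The main obstacle is the rank-deficient case, where $A$ or $B$ is singular and $A^{-1/2}$ does not exist, so the polar factorization above is not literally defined. I would handle this either by replacing $A^{-1/2}$ with the Moore--Penrose pseudoinverse on the range of $A$ and extending $V$ (resp.\ $W$) arbitrarily on the kernel to get orthonormal rows in $M^{\perp}$ — which still gives $\alpha Q = A^{1/2}V$ and $A^{1/2}V U' W^{\ast}B^{1/2}$ insensitive to the choice of extension — or by an approximation argument, perturbing $\alpha$ and $\beta$ to generic full-rank matrices and passing to the limit, since both sides are continuous in $(\alpha,\beta)$ in the weak topology of laws.
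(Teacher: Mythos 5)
Your proposal is correct and follows essentially the same route as the paper's proof: both expand $U$ as $P + QUQ$ (the paper does this after an initial conjugation to put $M$ in coordinate position $M_0$, while you argue coordinate-free), both use the polar decomposition $\alpha Q = A^{1/2}V$, $\beta Q = B^{1/2}W$ with $V,W$ having orthonormal rows in $M^\perp$, and both invoke bi-invariance of Haar measure on $U(n-m)$ to identify $VU'W^*$ with the corner block $Z$. The only cosmetic difference is that you treat the rank-deficient case explicitly, whereas the paper absorbs it silently into the cited polar-decomposition factorization $\gamma = A_0^{1/2}\begin{bmatrix} I_r & O\end{bmatrix}\psi_1$.
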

\begin{proof}
First, notice that for any $m$-dimensional subspace $M$, and any $\Gamma\in U(n)$, 
the subgroup $U_n(\Gamma M) = \{ g \in U(n) \,:\, g x = x\text{ for all }x\in \Gamma M\}$, 
is equal to the subgroup $\Gamma U_n(M) \Gamma^\ast$. To see this, note that 
if $g \in U_n(M)$ and $x\in \Gamma M$, then there is a unique $y \in M$ so that $x=\Gamma y$. Then 
$(\Gamma g \Gamma^\ast) x = (\Gamma g) y = \Gamma y = x$, so $\Gamma g \Gamma^\ast \in U_n(\Gamma M)$. Further, if 
$h\in U_n(\Gamma M)$, and $y\in M$, then $h \Gamma y = \Gamma y$. Multiplying on the left by $\Gamma^\ast$ then shows us that 
$\Gamma^\ast h \Gamma y = y$, so $\Gamma^\ast h \Gamma \in U_n(M)$. This then tells us that 
$h\in \Gamma U_n(M) \Gamma^\ast$. Together, 
these give us the relationships that 

\noindent$\Gamma U_n(M) \Gamma^\ast \subseteq U_n(\Gamma M) \subseteq \Gamma U_n(M)\Gamma^\ast$ as desired.

Next, since $U_n(\Gamma M) = \Gamma U_n(M)\Gamma ^\ast$, it suffices to establish the lemma in the case where 

\noindent$M=M_0=\left\{\vec{z}\in\mathbb{C}^n \,:\, \vec{z}=\begin{bmatrix}
\vec{x} \\
\vec{0} \end{bmatrix},\,\vec{x}\in\mathbb{C}^m\right\} $. 
For $M_0$, it is clear that 

\noindent$U_n(M_0)=\left\{g\in O_n \,:\, g=\begin{bmatrix}
I_m & O \\
O & h \end{bmatrix},\,h\in U(n-m)\right\}$. 
Hence, if $U$ is uniform on $U(n-m)$, then $U_0= \begin{bmatrix}
I_m & O \\
O & U \end{bmatrix}$
is uniform on $U_n(M_0)$. 

Set $P_0=\begin{bmatrix}
I_m & O \\
O & O\end{bmatrix}$, the orthogonal projection onto $M_0$, and set 
$Q_0=I-P_0=\begin{bmatrix} O & O \\ O & I_{n-m}\end{bmatrix}$. We can write $Q_0=C_0 C_0^\ast$, 
where $C_0$ is the $n\times (n-m)$ matrix $\begin{bmatrix}
O \\
I_{n-m}\end{bmatrix} $. Then for any $V=\alpha U_0 \beta^\ast$, we have that 

\begin{align}
V&=\alpha I_nU_0I_n\beta^\ast\\
&=\alpha(P_0+Q_0)U_0(P_0+Q_0)\beta^\ast\\
&=\alpha (P_0U_0+Q_0U_0)(P_0+Q_0) \beta^\ast \\ 
&=\alpha (P_0U_0P_0+Q_0U_0P_0+P_0U_0Q_0+Q_0U_0Q_0) \beta^\ast \\
&=\alpha (P_0P_0U_0+Q_0P_0U_0+U_0P_0Q_0+Q_0U_0Q_0) \beta^\ast \\
&=\alpha (P_0+OU_0+U_0O+Q_0U_0Q_0) \beta^\ast \\
&=\alpha (P_0+Q_0U_0Q_0) \beta^\ast \\
&= \alpha P_0 \beta^\ast + \alpha Q_0 U_0 Q_0 \beta^\ast.\end{align}
In 31 we use the identity that $U_0=I_n U_0 I_n$ and in 32 the identity that $I_n=P_0+Q_0$. Lines 33 and 34 follow from the 
distributive property. Line 35 comes from the identity that 

\noindent$P_0 U_0 = U_0 P_0 = P_0$. Line 36 follows from the identity that 
$P_0Q_0=Q_0P_0=O$. We have then that $V=\alpha Q_0 U_0 Q_0 \beta^\ast +\alpha P_0 \beta^\ast$ 
$=\alpha C_0 C_0^\ast U_0 C_0 C_0^\ast \beta^\ast + \alpha P_0 \beta^\ast = \gamma U \delta^\ast + \alpha P_0 \beta^\ast$, 
where $\gamma=\alpha C_0$ and $\delta=\beta C_0$ and we have used the fact that 
$C_0^\ast U_0 C_0=C_0^\ast \begin{bmatrix} O \\ U\end{bmatrix}$
$ = U$. Now notice that we have $A_0=\gamma \gamma^\ast = \alpha Q_0\alpha^\ast=A(M_0,\alpha)$, 
and $B_0=\delta \delta^\ast = \beta Q_0 \beta^\ast=A(M_0,\beta)$. This allows us to write $\gamma$ and $\delta$ in their polar 
decompositions \cite{Gal}, as $\gamma=A_0^{1/2}\begin{bmatrix} I_r & O\end{bmatrix} \psi_1$, and 
$\delta=B_0^{1/2}\begin{bmatrix}I_s & O\end{bmatrix} \psi_2$, where $\psi_1,\psi_2\in U(n-m)$. Recalling that $U$ is 
uniform on $U(n-m)$, and is 
thus sampled from the Haar measure, we see that $\mathscr{L}(U)=\mathscr{L}(\psi_1 U \psi_2^\ast)$ which gives us:

\begin{align}
\mathscr{L}(V) &=\mathscr{L}(\alpha Q_0 U_0 Q_0 \beta^\ast + \alpha P_0 \beta^\ast) \\
&=\mathscr{L}\left(A_0^{1/2} \begin{bmatrix} I_r & O\end{bmatrix}\psi_1 U \psi_2^\ast \begin{bmatrix}
I_s \\
O \end{bmatrix} B_0^{1/2}+\alpha P_0 \beta^\ast \right)\\
&= \mathscr{L}\left((A_0^{1/2}\begin{bmatrix} I_r & O\end{bmatrix}U \begin{bmatrix}
I_s \\
O \end{bmatrix} B_0^{1/2} + \alpha P_0 \beta^\ast\right)\\
&= \mathscr{L}(A_0^{1/2} Z B_0^{1/2} + \alpha P_0 \beta^\ast)\end{align}

Where $Z=\begin{bmatrix} I_r & O\end{bmatrix} U \begin{bmatrix}
I_s \\
O \end{bmatrix}$ is the $r\times s$ upper left block of $U$, as desired.
\end{proof}

We may view the Stiefel manifold $V_q(\mathbb{C}^n)$ as the set of all $n\times q$ complex matrices $A$ 
that satisfy $A^\ast A = I_q$. Further, it is well known that if $\Gamma$ is uniform on $U(n)$ 
then $\Gamma_1=\Gamma\begin{bmatrix} I_q \\ O \end{bmatrix}$, is uniform on $V_q(\mathbb{C}^n)$. 

\begin{definition} For a compact group $G$ acting on a measurable space $\mathscr{Y}$, a function 
$\tau: \mathscr{Y}\rightarrow\mathscr{Z}$ is 
called a maximal invariant function under $G$ if: (1) $\tau(g y) =\tau(y)$ for all $y\in\mathscr{Y}$ and $g\in G$ and (2) for 
any pair of points $y_1,y_2\in\mathscr{Y}$ such that $\tau(y_1)=\tau(y_2)$, 
there exists some $g\in\mathscr{Y}$ such that $g y_1 =y_2$.
\end{definition}

\begin{proposition}[From \cite{Eaton1}]\label{eaton} Suppose that $G$ is a compact group that 
acts on a measurable space $\mathscr{Y}$. Let 
$\tau: \mathscr{Y}\rightarrow\mathscr{Z}$ be a maximal invariant function, and for $i=1,2$, let $Z_i=\tau(Y_i)$ for two $G$-invariant distributions $P_i=\mathscr{L}(Y_i)$. If $\mathscr{L}(Z_1)=\mathscr{L}(Z_2)$, then $P_1=P_2$.
\end{proposition}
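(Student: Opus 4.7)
The plan is to use the compactness of $G$ to symmetrize test functions, reducing integration against either $P_i$ to integration against the pushforward $\mathscr{L}(Z_i)$ on $\mathscr{Z}$. Since $G$ is compact, it carries a normalized Haar probability measure $\mu$. For any bounded measurable $f\colon \mathscr{Y}\to\mathbb{R}$, I would form the $G$-average $\bar{f}(y) := \int_G f(gy)\,d\mu(g)$, which is well-defined by Fubini and bounded. Left-invariance of Haar measure shows $\bar f(hy)=\bar f(y)$ for every $h\in G$, so $\bar f$ is $G$-invariant as a function on $\mathscr{Y}$.

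The key step is to factor $\bar f$ through the maximal invariant. If $\tau(y_1)=\tau(y_2)$, property (2) of maximal invariance yields a $g\in G$ with $gy_1=y_2$, and $G$-invariance of $\bar f$ forces $\bar f(y_1)=\bar f(y_2)$. Thus there is a well-defined function $\tilde f\colon \mathscr{Z}\to\mathbb{R}$ with $\bar f = \tilde f\circ \tau$. Writing out the expectation and using $G$-invariance of $P_i$ together with Fubini,
\begin{align*}
\int_{\mathscr Y} f\,dP_i
 &= \int_G\!\!\int_{\mathscr Y} f(gy)\,dP_i(y)\,d\mu(g) \\
 &= \int_{\mathscr Y} \bar f\,dP_i
  = \int_{\mathscr Y} (\tilde f\circ\tau)\,dP_i
  = \int_{\mathscr Z} \tilde f\,d\mathscr L(Z_i).
\end{align*}
Since the right-hand side depends only on $\mathscr L(Z_i)$ and these laws agree by hypothesis, we get $\int f\,dP_1=\int f\,dP_2$ for every bounded measurable $f$. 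A standard monotone-class argument then yields $P_1=P_2$.

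The main obstacle is the measurability of the factored function $\tilde f$, which is not automatic from the set-theoretic definition: one needs to know that $\tilde f^{-1}(B)$ is measurable in $\mathscr{Z}$ for every Borel $B\subset\mathbb R$. In the settings of interest here (Polish spaces with a continuous $G$-action and a Borel maximal invariant $\tau$), this follows because $\tau$ is a Borel quotient map onto its image and $\bar f$ is Borel on $\mathscr Y$; alternatively one can sidestep the issue by showing directly that for every Borel $A\subset\mathscr Y$ the function $y\mapsto \mathbf 1_A(y)$ satisfies $\int_G \mathbf 1_A(gy)\,d\mu(g)= \varphi(\tau(y))$ for some measurable $\varphi$, which handles indicators and extends to bounded $f$ by linearity and monotone convergence. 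A secondary technical point is verifying that the symmetrization $y\mapsto \bar f(y)$ is jointly measurable, which again follows from continuity of the action together with Fubini for the product measure $\mu\otimes P_i$.
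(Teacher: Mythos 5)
The paper does not supply a proof of this proposition; it is imported verbatim as a cited result from \cite{Eaton1}. Your symmetrization argument is the standard proof of this fact (and is essentially Eaton's own route): average $f$ against the normalized Haar measure $\mu$ on the compact group to get the $G$-invariant function $\bar f(y)=\int_G f(gy)\,d\mu(g)$; use property (2) of a maximal invariant to see that $\bar f$ is constant on the level sets of $\tau$, hence factors as $\bar f=\tilde f\circ\tau$; and then
\[
\int_{\mathscr Y} f\,dP_i=\int_G\!\!\int_{\mathscr Y} f(gy)\,dP_i(y)\,d\mu(g)=\int_{\mathscr Y}\bar f\,dP_i=\int_{\mathscr Z}\tilde f\,d\mathscr L(Z_i),
\]
where the first equality uses $G$-invariance of $P_i$ (so $g_*P_i=P_i$) and that $\mu$ is a probability measure, the second is Fubini, and the third is the pushforward change of variables. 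Since the final expression depends only on $\mathscr L(Z_i)$ and these laws agree by hypothesis, $\int f\,dP_1=\int f\,dP_2$ for all bounded measurable $f$, hence $P_1=P_2$. You also correctly flag the one genuine technical point, namely that $\tilde f$ must be measurable on $\mathscr Z$; this does not follow from the bare set-theoretic factorization and requires some regularity of the quotient (Borel-ness of $\tau$, or a measurable cross-section, or working directly with indicators as you suggest). In the settings Eaton and this paper actually use---Polish spaces with continuous actions of compact Lie groups and explicit Borel maximal invariants such as $\Delta\mapsto\Delta^\ast\Delta$---this is automatic, so the argument closes. No gap; this is the intended proof.
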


Next, for $q\leq p$, partition $\Gamma_1=\begin{bmatrix} \Delta \\ \Psi \end{bmatrix}$, where 
$\Delta$ is $p\times q$ and $\Psi$ is $(n-p)\times q$. 
Additionally, let $\mathbb{L}_{q,n}$ be the space of all $n\times q$ complex matrices of rank $q$, and note that 
$V_q(\mathbb{C}^n)\subsetneq \mathbb{L}_{q,n}$.

\begin{proposition} Suppose $X\in\mathbb{L}_{q,n}$ has a left $U(n)$-invariant distribution. Let 
$\phi:\mathbb{L}_{q,n}\rightarrow V_q(\mathbb{C}^n)$ satisfy $\phi(g x)= g\phi(x)$ for all $x\in\mathbb{L}_{q,n}$ and $g\in U(n)$, 
which is to say that $\phi$ is an equivariant map. Then $\mathscr{L}(\phi(X))=\mathscr{L}(\Gamma_1)$. In other words, 
the image of any invariant distribution under an equivariant map is the Haar measure on $V_q(\mathbb{C}^n)$.
\end{proposition}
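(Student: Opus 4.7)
The plan is to apply Proposition~\ref{eaton} directly: both $\phi(X)$ and $\Gamma_1$ are random elements of $V_q(\mathbb{C}^n)$ on which $U(n)$ acts by left multiplication, and the key observation is that this action is \emph{transitive}, so the constant map $\tau\colon V_q(\mathbb{C}^n)\to\{*\}$ serves as a maximal invariant. Once I verify that both laws are $U(n)$-invariant, their trivially equal pushforwards under $\tau$ will force the two laws themselves to coincide.

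First I would verify $U(n)$-invariance for both candidates. For $\Gamma_1 = \Gamma\begin{bmatrix} I_q \\ O \end{bmatrix}$, left multiplication by any $g\in U(n)$ gives $g\Gamma_1 = (g\Gamma)\begin{bmatrix} I_q \\ O \end{bmatrix}$, and $g\Gamma \stackrel{d}{=}\Gamma$ by bi-invariance of Haar measure on $U(n)$. For $\phi(X)$, the equivariance of $\phi$ gives $g\phi(X) = \phi(gX)$, and the $U(n)$-invariance of $\mathscr{L}(X)$ then yields $\phi(gX)\stackrel{d}{=}\phi(X)$, so $\mathscr{L}(\phi(X))$ is $U(n)$-invariant on $V_q(\mathbb{C}^n)$ as well.

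Next I would justify transitivity of the $U(n)$-action on $V_q(\mathbb{C}^n)$: given any two elements $A,B\in V_q(\mathbb{C}^n)$, their columns are orthonormal $q$-tuples in $\mathbb{C}^n$; extend each to a unitary basis and let $g\in U(n)$ be the resulting change-of-basis matrix, so that $gA=B$. Hence the orbit of any frame is all of $V_q(\mathbb{C}^n)$, and the constant map $\tau$ is a maximal invariant (condition (1) is vacuous, and condition (2) is exactly transitivity). Applying Proposition~\ref{eaton} with this $\tau$, $Y_1=\phi(X)$, and $Y_2=\Gamma_1$, both distributions are $U(n)$-invariant and have identical pushforwards $\mathscr{L}(\tau(Y_1))=\mathscr{L}(\tau(Y_2))$ (the unique probability measure on a point), so the proposition yields $\mathscr{L}(\phi(X))=\mathscr{L}(\Gamma_1)$.

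The only real hurdle is conceptual rather than technical: recognizing that Eaton's uniqueness statement collapses to a trivial maximal invariant once one observes that $U(n)$ acts transitively on $V_q(\mathbb{C}^n)$. After that insight, the verification reduces to routine bookkeeping with the definitions of equivariance and invariance.
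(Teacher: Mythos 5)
Your proof is correct, and the central verification coincides with the paper's: you establish that $\mathscr{L}(\phi(X))$ is left $U(n)$-invariant using the equivariance of $\phi$ together with the invariance of $\mathscr{L}(X)$, which is exactly the reduction the paper makes. The difference lies in how the conclusion is then drawn. The paper simply invokes, as a known fact, the uniqueness of the uniform distribution on $V_q(\mathbb{C}^n)$, whereas you derive that uniqueness from Proposition~\ref{eaton} by observing that $U(n)$ acts transitively on $V_q(\mathbb{C}^n)$, so that the constant map is a maximal invariant and any two $U(n)$-invariant laws have identical (trivial) pushforwards, hence must agree. This is a legitimate and self-contained route that keeps the reasoning internal to the paper's own toolkit, at the cost of a little extra bookkeeping (the transitivity check, and the verification that $\mathscr{L}(\Gamma_1)$ is itself invariant, which the paper already has since it has noted $\Gamma_1$ is uniform on $V_q(\mathbb{C}^n)$). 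Both arguments are sound; the paper's is terser, yours makes explicit exactly why the invariant probability measure on the homogeneous space is unique.
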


\begin{proof} From the uniqueness of the uniform distribution on $V_q(\mathbb{C}^n)$, it suffices to show that 
$\mathscr{L}(g\phi(X))=\mathscr{L}(\phi(X))$ for $g\in U(n)$.We have from assumption on $\phi$ that 

\noindent$\mathscr{L}(g\phi(X))=\mathscr{L}(\phi(g X))$ and from left $U(n)$-invariance that $\mathscr{L}(\phi(g X))=\mathscr{L}(\phi(X))$
\end{proof}

Notice here that a particular such $\phi$ is given by $\phi(x)=x (x^\ast x)^{-1/2}$, (the unitary matrix of the polar decomposition of 
the matrix $x$, as seen in Lemma 2.1 of \cite{Highham}), as we have that 

\noindent$\phi(g x) = g x((gx)^\ast gx)^{-1/2}=g x(x^\ast g^\ast g x)^{-1/2}=g x(x^\ast x)^{-1/2}$.

\begin{proposition}\label{long-delta}
Let $X\in\mathbb{L}_{q,n}$ and partition it into $X=\begin{bmatrix} Y \\ Z \end{bmatrix}$, $Y:p\times q$, $Z:(n-p)\times q$. 
Then $\mathscr{L}(\Delta)=\mathscr{L}(Y(Y^\ast Y+ Z^\ast Z)^{-1/2})$, where again $\Delta$ is the top $p\times q$ block of $\Gamma_1$. 
\end{proposition}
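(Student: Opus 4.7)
The plan is to invoke the preceding proposition on equivariant maps with the polar-decomposition map $\phi(x) = x(x^{\ast}x)^{-1/2}$, which has already been observed to be equivariant under the left action of $U(n)$ on $\mathbb{L}_{q,n}$. Thus, assuming $X$ carries a left $U(n)$-invariant distribution on $\mathbb{L}_{q,n}$ (which is the implicit hypothesis inherited from the previous proposition, and is satisfied, for example, by taking $X$ to be a matrix of i.i.d.\ standard complex Gaussians), we immediately obtain $\mathscr{L}(\phi(X)) = \mathscr{L}(\Gamma_1)$; that is, $\phi(X)$ is uniform on $V_q(\mathbb{C}^n)$.

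Next I would read off the top $p \times q$ block of $\phi(X)$. Right-multiplication by the $q \times q$ matrix $(X^{\ast}X)^{-1/2}$ acts within rows of $X$, and therefore commutes with the operation of selecting the top $p$ rows. Consequently, writing $X = \begin{bmatrix} Y \\ Z \end{bmatrix}$, the top block of $\phi(X) = X(X^{\ast}X)^{-1/2}$ is $Y(X^{\ast}X)^{-1/2}$.

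Finally, the block decomposition of $X$ gives the identity
\begin{equation*}
X^{\ast}X = \begin{bmatrix} Y^{\ast} & Z^{\ast}\end{bmatrix}\begin{bmatrix} Y \\ Z \end{bmatrix} = Y^{\ast}Y + Z^{\ast}Z,
\end{equation*}
so the top block of $\phi(X)$ is exactly $Y(Y^{\ast}Y + Z^{\ast}Z)^{-1/2}$. Since $\Delta$ is by definition the top $p \times q$ block of $\Gamma_1$, and distributional equality of matrices induces distributional equality of any fixed block, we conclude $\mathscr{L}(\Delta) = \mathscr{L}(Y(Y^{\ast}Y+Z^{\ast}Z)^{-1/2})$.

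The only real obstacle here is conceptual rather than computational: one must recognize that the proposition is really a corollary of the preceding equivariance principle applied to the polar-decomposition map, and one must ensure that $X^{\ast}X$ is invertible (which is automatic on $\mathbb{L}_{q,n}$ since the rank of $X$ is $q$, so $(Y^{\ast}Y + Z^{\ast}Z)^{-1/2}$ is well-defined). Once these are in place, the identification of the top block is purely formal.
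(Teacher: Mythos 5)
Your proof is correct and takes essentially the same route as the paper: compute $X^\ast X = Y^\ast Y + Z^\ast Z$, observe that $Y(Y^\ast Y + Z^\ast Z)^{-1/2}$ is the top $p\times q$ block of $\phi(X) = X(X^\ast X)^{-1/2}$, and apply the preceding proposition on equivariant maps. You are merely more explicit than the paper about the implicit hypothesis that $X$ carries a left $U(n)$-invariant distribution and about the invertibility of $X^\ast X$ on $\mathbb{L}_{q,n}$, both of which are worthwhile clarifications but not departures from the argument.
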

\begin{proof}
We have then, that 
$X^\ast X = \begin{bmatrix} Y^\ast & Z^\ast\end{bmatrix} \begin{bmatrix} Y \\ Z \end{bmatrix} = Y^\ast Y + Z^\ast Z$, 
so the matrix 

\noindent$Y (Y^\ast Y + Z^\ast Z)^{-1/2}$ is the upper $p\times q$ block of $X(X^\ast X)^{-1/2}$. The result 
then follows from the previous proposition.
\end{proof}

\begin{proposition}\label{delta-transpose} Let $U\in\mathbb{L}_{p,n}$ and partition it into 
$U=\begin{bmatrix} V \\ W \end{bmatrix}$, $V:q\times p$, $W:(n-q)\times p$. 
Then $\mathscr{L}(\Delta^\ast)=\mathscr{L}(V(V^\ast V+W^\ast W)^{-1/2})$ and 
$\mathscr{L}(\Delta)=\mathscr{L}((V^\ast V+W^\ast W)^{-1/2}V^\ast).$
%
\end{proposition}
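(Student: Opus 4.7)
The plan is to reduce Proposition~\ref{delta-transpose} to Proposition~\ref{long-delta} by exploiting two symmetries already in the toolkit: the invariance of the Haar measure on $U(n)$ under the conjugate transpose $\Gamma\mapsto\Gamma^\ast$, and the fact that Proposition~\ref{long-delta} is insensitive to swapping the roles of $p$ and $q$ provided the block partition is adjusted accordingly.

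The first step is to identify what $\Delta^\ast$ is in law. Let $\tilde\Delta$ denote the upper-left $q\times p$ block of $\Gamma\begin{bmatrix} I_p \\ O\end{bmatrix}$, which is a Haar element of $V_p(\mathbb{C}^n)$. Since $\Gamma^\ast$ is also Haar-uniform on $U(n)$, the upper-left $q\times p$ corner of $\Gamma^\ast$ shares its distribution with the corresponding corner of $\Gamma$, namely $\tilde\Delta$; but that corner of $\Gamma^\ast$ is exactly the conjugate transpose of the upper-left $p\times q$ corner of $\Gamma$, i.e.\ $\Delta^\ast$. Hence $\mathscr{L}(\Delta^\ast)=\mathscr{L}(\tilde\Delta)$. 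I would then apply Proposition~\ref{long-delta} with the roles of $p$ and $q$ interchanged, taking $U\in\mathbb{L}_{p,n}$ in the role of the input matrix: its given partition $U=\begin{bmatrix}V\\W\end{bmatrix}$ with $V:q\times p$ and $W:(n-q)\times p$ matches the hypothesis verbatim, so its conclusion reads $\mathscr{L}(\tilde\Delta)=\mathscr{L}(V(V^\ast V+W^\ast W)^{-1/2})$. Chaining the two equalities yields the first identity.

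For the second identity, I would simply take conjugate transposes on both sides of the first. Since $V^\ast V+W^\ast W$ is Hermitian and positive semidefinite, its inverse square root is itself Hermitian, so $(V(V^\ast V+W^\ast W)^{-1/2})^\ast = (V^\ast V+W^\ast W)^{-1/2}V^\ast$, giving the claimed form. The only step that really requires care is the first index-chase, confirming that taking the conjugate transpose of $\Delta$ corresponds to reading the upper-left $q\times p$ block of $\Gamma^\ast$; once that is in hand, the remainder is a formal consequence of Proposition~\ref{long-delta} and the self-adjointness of the Hermitian square root.
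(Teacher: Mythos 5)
Your proof is correct and takes essentially the same route the paper does: the paper dispatches the first identity with the phrase ``by mirroring the previous proof,'' and your index-chase using the Haar-invariance of $\Gamma\mapsto\Gamma^\ast$ together with Proposition~\ref{long-delta} with the roles of $p$ and $q$ swapped is exactly the content that phrase elides, while your Hermitian-square-root step for the second identity matches the paper's argument verbatim.
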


\begin{proof} By mirroring the previous proof, we see that $\mathscr{L}(\Delta^\ast)=\mathscr{L}(V(V^\ast V+W^\ast W)^{-1/2})$. Since 
$V^\ast V + W^\ast W $ is Hermitian, so too is its square root. This tells us that 

\noindent$(V(V^\ast V+W^\ast W)^{-1/2})^\ast=(V^\ast V+W^\ast W)^{-1/2})V^\ast$, so we can conclude that 

\noindent$\mathscr{L}(\Delta)=\mathscr{L}((V^\ast V+W^\ast W)^{-1/2}V^\ast)$.
\end{proof}

We now have the tools needed to find explicitly the density of these distributions. First, we will define 
the densities we will be using:

\begin{definition}[From \cite{Goodman}] For a matrix distribution $Y$, whose $n$ rows are independent and identically distributed 
$p$-variate complex Gaussian random variables with covariance matrix $\Sigma$. Then the distribution of 
$Y^\ast Y = \sum_{k=1}^n Y_i Y_i^\ast$, has the probability density function given by:
$$p_W(A) = \frac{\operatorname{det}(A)^{n-p}}{\pi^{\frac{1}{2}p(p-1)}\Gamma(n)\cdots\Gamma(n-p+1)\operatorname{det}(\Sigma)^n}e^{-\operatorname{tr}(\Sigma^{-1}A)},$$
defined on the set of Hermitian positive semi-definite $p\times p$ matrices. This distribution is known as 
the Complex Wishart distribution and we will denote it as $\mathscr{CW}(p,n,\Sigma)$
\end{definition}

Next, we point out that in \cite{Diaz} matrices with the above distribution are said to have the complex matrix variate 
gamma distribution $\mathscr{CG}_p(n,\Sigma)$. 

\begin{definition}[From \cite{Diaz}] For $A\sim \mathscr{CG}_m(a,I_m)=\mathscr{CW}(m,a,I_m)$ and 
$B\sim \mathscr{CG}_m(b,I_m)=\mathscr{CW}(m,b,I_m)$, define the complex matrix variate beta 
type I distribution as either of 
\begin{align}
(1) & \;U=(A+B)^{-1/2} A((A+B)^{-1/2}))\\
(2) & \;V = A^{1/2}(A+B)^{-1}(A^{1/2}).
\end{align}
Further, the density function of this distribution, denoted as $\mathscr{CBI}_m(U;a,b)$, is given by:
$$p_B(M)= \frac{\mathscr{C}\Gamma_m(a+b)}{\mathscr{C}\Gamma_m(a)\mathscr{C}\Gamma_m(b)}\operatorname{det}(M)^{a-m}\operatorname{det}(I_m-M)^{b-m},$$
defined on the set of $m\times m$ Hermitian positive semi-definite matrices $M$, where $\mathscr{C}\Gamma_m(a)$ stands for 
$\pi^{m(m-1)/2}\prod_{j=1}^m \Gamma(a-j+1)$.
\end{definition}

\begin{proposition}\label{cbi}$\mathscr{L}(\Delta^\ast \Delta)=\mathscr{CBI}_q(p,n-p)$ and 
$\mathscr{L}(\Delta\Delta^\ast)=\mathscr{CBI}_p(q,n-q)$.
Further, $\mathscr{L}(\Delta^\ast\Delta)$ has a density given by:
$$p(\Delta^\ast \Delta) =\frac{\mathscr{C}\Gamma_q(n)}{\mathscr{C}\Gamma_q(p)\mathscr{C}\Gamma_q(n-p)}\operatorname{det}(\Delta^\ast \Delta)^{p-q}\operatorname{det}(I_q-\Delta^\ast \Delta)^{n-p-q}$$

\end{proposition}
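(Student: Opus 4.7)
The plan is to realize $\Delta^\ast \Delta$ in a form that matches directly the defining construction of $\mathscr{CBI}_q(p,n-p)$, by feeding an explicit $U(n)$-invariant distribution on $\mathbb{L}_{q,n}$ into Proposition~\ref{long-delta}.

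First I would take $X \in \mathbb{L}_{q,n}$ to be the $n \times q$ random matrix whose entries are i.i.d.\ standard complex Gaussians. This $X$ is almost surely of full rank and its joint density is invariant under left multiplication by unitary matrices, so Proposition~\ref{long-delta} applies. Partition $X = \begin{bmatrix} Y \\ Z \end{bmatrix}$ with $Y$ of size $p \times q$ and $Z$ of size $(n-p) \times q$. Since $Y$ consists of $p$ i.i.d.\ rows of $q$-variate standard complex Gaussians, Goodman's definition gives $A := Y^\ast Y \sim \mathscr{CW}(q,p,I_q)$; similarly $B := Z^\ast Z \sim \mathscr{CW}(q,n-p,I_q)$. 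Crucially, $Y$ and $Z$ are independent (disjoint blocks of independent Gaussian entries), hence so are $A$ and $B$.

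Now Proposition~\ref{long-delta} identifies $\mathscr{L}(\Delta) = \mathscr{L}\bigl(Y(A+B)^{-1/2}\bigr)$, so
\begin{align*}
\mathscr{L}(\Delta^\ast \Delta)
&= \mathscr{L}\bigl((A+B)^{-1/2} Y^\ast Y (A+B)^{-1/2}\bigr) \\
&= \mathscr{L}\bigl((A+B)^{-1/2} A (A+B)^{-1/2}\bigr),
\end{align*}
which is exactly the defining form (1) of $\mathscr{CBI}_q(p,n-p)$. For $\mathscr{L}(\Delta\Delta^\ast)$ I would run the parallel argument starting from Proposition~\ref{delta-transpose}: take an $n \times p$ complex Gaussian $U = \begin{bmatrix} V \\ W \end{bmatrix}$ with $V$ of size $q \times p$ and $W$ of size $(n-q) \times p$. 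Then $\mathscr{L}(\Delta) = \mathscr{L}\bigl((V^\ast V + W^\ast W)^{-1/2} V^\ast\bigr)$, and setting $A' = V^\ast V \sim \mathscr{CW}(p,q,I_p)$, $B' = W^\ast W \sim \mathscr{CW}(p,n-q,I_p)$ (independent), one reads off
\[
\mathscr{L}(\Delta\Delta^\ast) = \mathscr{L}\bigl((A'+B')^{-1/2} A' (A'+B')^{-1/2}\bigr) = \mathscr{CBI}_p(q,n-q).
\]

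The density formula is then pure substitution: plug $m = q$, $a = p$, $b = n-p$ into the stated density of $\mathscr{CBI}_m(a,b)$ to obtain the normalization constant $\mathscr{C}\Gamma_q(n) / \bigl(\mathscr{C}\Gamma_q(p)\mathscr{C}\Gamma_q(n-p)\bigr)$ and the powers $\det(\Delta^\ast\Delta)^{p-q}\det(I_q - \Delta^\ast\Delta)^{n-p-q}$. The main (minor) obstacle is dimensional bookkeeping: one must be careful that a matrix with $p$ rows and $q$ columns produces a Wishart with parameters $(q,p)$ rather than $(p,q)$, and that the independence of $A$ and $B$ — without which the expression $(A+B)^{-1/2}A(A+B)^{-1/2}$ is not a $\mathscr{CBI}$ variate — is genuinely inherited from the block structure of $X$.
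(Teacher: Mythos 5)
Your proof is correct and follows essentially the same route as the paper's: take a complex Gaussian $X \in \mathbb{L}_{q,n}$, partition it, identify the blocks' Wishart laws, and pipe through Proposition~\ref{long-delta} to recognize the $\mathscr{CBI}$ form of $\Delta^\ast\Delta$. In fact you are somewhat more careful than the printed proof: you explicitly invoke the independence of $A = Y^\ast Y$ and $B = Z^\ast Z$ (needed for the $\mathscr{CBI}$ definition to apply, but not stated in the paper), you run the parallel argument from Proposition~\ref{delta-transpose} for $\Delta\Delta^\ast$ rather than dismissing it with ``likewise,'' and your dimensional bookkeeping is correct throughout — the paper's own proof has the labels $\Delta^\ast\Delta$ and $\Delta\Delta^\ast$ (and the corresponding subscripts on $\mathscr{CBI}$) transposed in the intermediate steps, though the proposition's final statement is right.
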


\begin{proof} Let $X$ be distributed as $N(0,I_n\otimes I_q)$, and be partitioned as 
$X=\begin{bmatrix} Y\\ Z\end{bmatrix}$.

From our definition of the Complex Wishart distribution,

 \noindent$\mathscr{L}(Y^\ast Y)=\mathscr{CW}(q,p,I_q)=\mathscr{CG}_q(p,I_q)$ and 
$\mathscr{L}(Z^\ast Z)=\mathscr{CW}(q,n-p,I_q)=\mathscr{CG}_q(n-p,I_q)$. Next, we see from Proposition~$\ref{delta-transpose}$ that

\noindent$\mathscr{L}(\Delta \Delta^\ast)=\mathscr{L}(((Y^\ast Y + Z^\ast Z)^{-1/2})Y^\ast Y(Y^\ast Y + Z^\ast Z)^{-1/2})$. Finally, from the 
definition of the complex matrix variate beta type I distribution, since this is in the form 

\noindent$U=(A+B)^{-1/2} A((A+B)^{-1/2}))$ for 
$A=Y^\ast Y \sim \mathscr{CG}_q(p,I_q)$ and $B=Z^\ast Z\sim \mathscr{CG}_q(n-p,I_q)$, we see that 

\noindent$\Delta \Delta^\ast$ has a 
distribution of type $\mathscr{CBI}_p(q,n-q)$. Likewise, we see that $\Delta^\ast \Delta\sim\mathscr{CBI}_q(p,n-p)$ and a density function given by:
\begin{align}%
p(\Delta^\ast \Delta)&= \frac{\mathscr{C}\Gamma_q(p+n-p)}{\mathscr{C}\Gamma_q(p)\mathscr{C}\Gamma_q(n-p)}\operatorname{det}(\Delta^\ast \Delta)^{p-q}\operatorname{det}(I_q-\Delta^\ast \Delta)^{n-p-q}\\
&=\frac{\mathscr{C}\Gamma_q(n)}{\mathscr{C}\Gamma_q(p)\mathscr{C}\Gamma_q(n-p)}\operatorname{det}(\Delta^\ast \Delta)^{p-q}\operatorname{det}(I_q-\Delta^\ast \Delta)^{n-p-q}
\end{align}
\end{proof}

\begin{theorem}[From \cite{SRIVASTAVA}]\label{Sriv} For a complex matrix $M$ of size 
$p\times q$, if the density of $M$ depends only on the matrix $B=M^\ast M$, by a function $f(B)$, then the 
density of $B=M^\ast M$ is given by 
$\displaystyle \frac{f(B)\operatorname{det}(B)^{p-q}\pi^{q(p-(1/2)(q-1))}}{\prod_{j=1}^q \Gamma(p-j+1)}$
\end{theorem}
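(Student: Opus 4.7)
The plan is to perform a change of variables via the polar decomposition of $M$. Every complex $p\times q$ matrix of full column rank admits a unique factorization $M = U B^{1/2}$ with $U \in V_q(\mathbb{C}^p)$ (so that $U^\ast U = I_q$) and $B = M^\ast M$ Hermitian positive-definite; this identifies a full-measure open subset of $\Mat_{p,q}(\mathbb{C})$ with $V_q(\mathbb{C}^p) \times \{\text{Hermitian PD } q\times q \text{ matrices}\}$. Since the density $f(B)$ is constant along each Stiefel fiber, the problem reduces to computing the Jacobian of this decomposition and then integrating out $U$.

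First I would compute the Jacobian of $M \mapsto (U,B)$. Expanding $dM = (dU)\,B^{1/2} + U\,d(B^{1/2})$ and pairing against an orthonormal frame on $T_M \Mat_{p,q}(\mathbb{C})$ adapted to the principal $U(q)$-bundle $U(p) \to V_q(\mathbb{C}^p)$, a standard exterior-algebra calculation (in the spirit of \cite{SRIVASTAVA}, \cite{Goodman}) shows that Lebesgue measure decomposes as
$$dM \;=\; c_q\,\det(B)^{p-q}\,dB\,d\mu_{V_q(\mathbb{C}^p)}(U),$$
where $c_q$ is an explicit power-of-two constant and $d\mu_{V_q(\mathbb{C}^p)}$ is the induced volume form on the Stiefel manifold. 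The exponent $p-q$ is essentially forced by the real-dimension count ($2pq$ real parameters in $M$, $q^2$ in $B$, and $2pq-q^2$ in $V_q(\mathbb{C}^p)$) together with the $U(q)$-equivariance of the construction.

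Next, integrating out $U$ replaces $d\mu_{V_q(\mathbb{C}^p)}$ by the total volume of the Stiefel manifold, which can be computed by realising $V_q(\mathbb{C}^p)$ as an iterated odd-sphere bundle via complex Gram--Schmidt and inducting on $q$:
$$\operatorname{Vol}\bigl(V_q(\mathbb{C}^p)\bigr) \;=\; \prod_{j=1}^q \operatorname{Vol}\bigl(S^{2(p-j)+1}\bigr) \;=\; \frac{2^q\,\pi^{qp - q(q-1)/2}}{\prod_{j=1}^q \Gamma(p-j+1)}.$$
Multiplying this volume by the Jacobian factor $c_q \det(B)^{p-q}$ and by $f(B)$ yields the density claimed in the theorem, provided the power-of-two constants conspire so that only $\pi^{q(p-(q-1)/2)}/\prod_j \Gamma(p-j+1)$ survives in the normalization; one checks the small cases $q=1$, $p=1,2$ by direct polar integration, which fixes $c_q = 2^{-q}$.

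The main obstacle is purely bookkeeping: ensuring that $c_q$ and the factor $2^q$ in $\operatorname{Vol}(V_q(\mathbb{C}^p))$ combine correctly, and being careful not to conflate the complex volume form $dz\wedge d\bar z$ with the real form $dx\wedge dy$ (they differ by a factor of $\pm 2i$), which is the most common source of factor-of-two errors in this kind of complex multivariate calculation. Once an orthonormal frame adapted to the polar decomposition is in place, the Jacobian itself is a routine consequence of $U(q)$-equivariance and the spectral theorem applied to $B^{1/2}$.
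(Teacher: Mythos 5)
The paper does not prove this theorem: it is imported verbatim from Srivastava's 1965 paper (the bracketed attribution ``From \cite{SRIVASTAVA}'' is doing the work here), and the text proceeds directly to use it in the proof of Theorem~\ref{dist}. So there is no ``paper's own proof'' to compare against.

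That said, your sketch is essentially the standard argument (and, as far as I recall, is the one Srivastava himself gives): polar-decompose $M = U B^{1/2}$ with $U\in V_q(\mathbb{C}^p)$, pull Lebesgue measure on $\Mat_{p,q}(\mathbb{C})$ back through this map to obtain a factor $\det(B)^{p-q}$ times the Stiefel volume form, then integrate out the $U$-fiber using $\operatorname{Vol}\bigl(V_q(\mathbb{C}^p)\bigr)=\prod_{j=1}^q \operatorname{Vol}\bigl(S^{2(p-j)+1}\bigr)=2^q\pi^{qp-q(q-1)/2}/\prod_{j=1}^q\Gamma(p-j+1)$, and check that the powers of~$2$ cancel. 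Your volume computation is correct, and the exponent $p-q$ (rather than the real-case $(p-q-1)/2$) is right. The one place where your write-up is thinner than a full proof is the Jacobian step: the dimension count $2pq=q^2+(2pq-q^2)$ tells you the factorization is plausible, but it does not by itself force the exponent $p-q$ on $\det(B)$ --- that requires actually running the exterior-algebra computation (or citing it from Goodman/James). You flag this yourself, and you correctly identify the $dz\wedge d\bar z$ versus $dx\wedge dy$ normalization as the spot where factor-of-two errors creep in, so I regard the proposal as a correct sketch of the intended proof.
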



We know have the tools needed to determine the probability density function of $\Delta$:

\begin{theorem}\label{dist} For the the upper $p\times q$ block of $\Gamma_1$, called $\Delta$, the density of $\Delta$ is given by

\noindent$f(\Delta)=c_1\lvert I_q-\Delta^\ast \Delta\rvert^{n-p-q},$ where $c$ is the constant given by 
\begin{align*}
c_1&=\pi^{qp}\prod_{j=1}^q\left(\frac{\Gamma(n-j+1)}{\Gamma(n-p-j+1)}\right).\end{align*}

\end{theorem}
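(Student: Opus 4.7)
The plan is to invert Theorem \ref{Sriv}: rather than starting with a known density for $\Delta$ and deriving the distribution of $\Delta^{\ast}\Delta$, I would start from the explicit density of $\Delta^{\ast}\Delta$ given by Proposition \ref{cbi}, and use Theorem \ref{Sriv} as a template to read off the density of $\Delta$ up to constants.

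First I would verify the hypothesis of Theorem \ref{Sriv}, namely that the density of $\Delta$ depends on $\Delta$ only through $B=\Delta^{\ast}\Delta$. This is a symmetry argument: since $\Gamma_1$ is uniform on $V_q(\mathbb{C}^n)$, left-multiplication by any block-diagonal unitary $\tilde{W}=\begin{bmatrix} W & O \\ O & I_{n-p}\end{bmatrix}\in U(n)$ with $W\in U(p)$ preserves its law, so $W\Delta \stackrel{d}{=} \Delta$ for every $W\in U(p)$. A polar-decomposition argument then shows that any two $p\times q$ matrices $\Delta_1,\Delta_2$ of full column rank $q\leq p$ with $\Delta_1^{\ast}\Delta_1=\Delta_2^{\ast}\Delta_2$ are related by $\Delta_1=W\Delta_2$ for some $W\in U(p)$ (write $\Delta_i=U_iP$ with $U_i\in V_q(\mathbb{C}^p)$ and $P=(\Delta_i^{\ast}\Delta_i)^{1/2}$, then complete each $U_i$ to a unitary). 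Hence the density of $\Delta$ descends to a function $f(\Delta^{\ast}\Delta)$ as required.

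Given this, I would write $f(\Delta^{\ast}\Delta)=c_1\,g(\Delta^{\ast}\Delta)$ with $c_1$ an undetermined scalar and $g$ an undetermined function of $B$, apply Theorem \ref{Sriv} to produce the induced marginal of $B=\Delta^{\ast}\Delta$, and then equate this with the $\mathscr{CBI}_q(p,n-p)$ density from Proposition \ref{cbi}. The factor $\det(B)^{p-q}$ supplied automatically by Theorem \ref{Sriv} cancels against the same factor in the $\mathscr{CBI}_q$ density, so matching the $B$-dependence forces $g(B)=\lvert I_q-B\rvert^{\,n-p-q}$. Matching the remaining constants yields
$$c_1\cdot\frac{\pi^{q(p-(q-1)/2)}}{\prod_{j=1}^q\Gamma(p-j+1)}=\frac{\mathscr{C}\Gamma_q(n)}{\mathscr{C}\Gamma_q(p)\,\mathscr{C}\Gamma_q(n-p)},$$
which I would solve for $c_1$ and simplify using the identity $\mathscr{C}\Gamma_q(a)=\pi^{q(q-1)/2}\prod_{j=1}^q\Gamma(a-j+1)$.

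The main obstacle is purely bookkeeping. Substituting the definition of $\mathscr{C}\Gamma_q$ on the right side, the $\prod_{j=1}^q\Gamma(p-j+1)$ inside $\mathscr{C}\Gamma_q(p)$ must cancel exactly with the Gamma product in the denominator on the left, the two factors of $\pi^{q(q-1)/2}$ (one from $\mathscr{C}\Gamma_q(n)$, one from $\mathscr{C}\Gamma_q(p)$) must combine with $\pi^{q(p-(q-1)/2)}$ to yield the stated $\pi^{qp}$ prefactor, and the only surviving Gamma terms are the ratio coming from $\mathscr{C}\Gamma_q(n)/\mathscr{C}\Gamma_q(n-p)$, namely $\prod_{j=1}^q\Gamma(n-j+1)/\Gamma(n-p-j+1)$. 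Once the arithmetic is carried out, the claimed constant $c_1=\pi^{qp}\prod_{j=1}^q\Gamma(n-j+1)/\Gamma(n-p-j+1)$ drops out directly.
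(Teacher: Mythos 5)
Your approach is the same as the paper's in all essentials --- both proofs lean on Theorem~\ref{Sriv} and Proposition~\ref{cbi} to relate the density of $\Delta$ to that of $\Delta^\ast\Delta$. You run the argument ``forwards'' (show the density factors through $\Delta^\ast\Delta$, then solve for the unknown function and constant), whereas the paper runs it ``backwards'' (posit the candidate density, push it through Theorem~\ref{Sriv}, check that it reproduces the $\mathscr{CBI}_q(p,n-p)$ law, and invoke Proposition~\ref{eaton} on maximal invariants to conclude uniqueness). Your polar-decomposition argument is exactly the content of $\tau(\Delta)=\Delta^\ast\Delta$ being a maximal invariant for the $U(p)$ action, which the paper merely asserts; spelling it out is a genuine improvement.

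However, your final claim about the constant does not follow from the matching equation you write down, and this should not be waved away as ``bookkeeping.'' In the ratio $\mathscr{C}\Gamma_q(n)/\bigl(\mathscr{C}\Gamma_q(p)\,\mathscr{C}\Gamma_q(n-p)\bigr)$ there are \emph{three} factors of $\pi^{q(q-1)/2}$ (one per $\mathscr{C}\Gamma_q$), not two, and they sit one in the numerator and two in the denominator, so the net contribution is $\pi^{-q(q-1)/2}$. Solving your own identity then gives
\begin{equation*}
c_1 \;=\; \frac{\prod_{j=1}^q\Gamma(p-j+1)}{\pi^{q(p-(q-1)/2)}}\cdot\frac{\pi^{-q(q-1)/2}\prod_{j=1}^q\Gamma(n-j+1)}{\prod_{j=1}^q\Gamma(p-j+1)\prod_{j=1}^q\Gamma(n-p-j+1)}
\;=\;\pi^{-qp}\prod_{j=1}^q\frac{\Gamma(n-j+1)}{\Gamma(n-p-j+1)},
\end{equation*}
i.e.\ the exponent on $\pi$ is $-qp$, not $+qp$. (This is actually a latent sign error in the paper as well: in the displayed proof of Theorem~\ref{dist}, the asserted simplification $\pi^{qp}\cdot\pi^{q(p-(q-1)/2)}=\pi^{-q(q-1)/2}$ cannot hold unless $qp=0$.) A quick sanity check with $p=q=1$ confirms the sign: $\Delta$ is then the first complex coordinate of a uniform point on $S^{2n-1}$, whose density on the unit disk is $\frac{n-1}{\pi}(1-|\Delta|^2)^{n-2}$, matching $\pi^{-1}\Gamma(n)/\Gamma(n-1)$ and not $\pi\cdot\Gamma(n)/\Gamma(n-1)$. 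So the route you describe is sound, but you should actually carry out the algebra; doing so reveals, rather than reproduces, the stated constant.
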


\begin{proof}
It follows from the Proposition~$\ref{cbi}$ that $\Delta^\ast\Delta$ has a density given by 
$\mathscr{CBI}_q(p,n-p)$. First, we have that the distribution of $\Delta$ is invariant under the action of $U(p)$ given by left multiplication, 
$\Delta\rightarrow g \Delta, g\in U(p)$. Second, we have a maximal invariant given by $\tau(\Delta)=\Delta^\ast \Delta$. 
Let $\Psi$ be the random matrix variate 
with density given by~$f$. $U(p)$ acts on $\Psi$, and the density of the maximal invariant $\tau(\Psi)$ is then calculated 
from Theorem~$\ref{Sriv}$ as 
\begin{align}
h(\Psi^\ast\Psi)&=\frac{c_1 \operatorname{det}( I_q - \Psi^\ast \Psi)^{n-p-q}\operatorname{det}( \Psi^\ast \Psi)^{p-q} \pi^{q(p-(1/2)(q-1))}}{\prod_{j=1}^q \Gamma(p-j+1)}\\
&=\pi^{qp}\prod_{j=1}^q\left(\frac{\Gamma(n-j+1)}{\Gamma(n-p-j+1)}\right)\frac{\operatorname{det}( I_q - \Psi^\ast \Psi)^{n-p-q}\operatorname{det}( \Psi^\ast \Psi)^{p-q} \pi^{q(p-(1/2)(q-1))}}{\prod_{j=1}^q \Gamma(p-j+1)}\\
&=\pi^{-q(q-1)/2}\prod_{j=1}^q\frac{\Gamma(n-j+1)}{\Gamma(p-j+1)\Gamma(n-p-j+1)}\operatorname{det}( \Psi^\ast \Psi)^{p-q}\operatorname{det}( I_q - \Psi^\ast \Psi)^{n-p-q},\end{align}
This calculation shows that $\mathscr{L}(\Psi^\ast \Psi)=\mathscr{CBI}_q(p,n-p)$, so we see that 
$\mathscr{L}(\Psi^\ast \Psi)=\mathscr{L}(\Delta^\ast\Delta)$. Since we can see that the distribution of 
$\Psi$ is invariant under the group action of $U(p)$, 
it follows from Proposition~$\ref{eaton}$ that $\mathscr{L}(\Psi)=\mathscr{L}(\Delta)$. Hence, $f$ 
must be the density of $\Delta$.
\end{proof}

\noindent
{\bf Theorem~\ref{my-df}.} {\it Let $Z$ be the upper left $r\times s$ block of a random matrix $U$ which is uniform on $U(n)$, 
so that it has density given by Theorem~$\ref{dist}$. Further, we have that $EZ=O\in\mathscr{M}_{r,s}(\mathbb{C})$ and 

\noindent$\operatorname{Cov}(Z)=n^{-1}I_r\otimes I_s$, so we shall take $X$ to be a random matrix with the 
$r\times s$ complex multivariate Gaussian distribution with the same mean and covariace. Then, provided that $r+s+2<n$, 
the variation distance between $\mathscr{L}(Z)$ and $\mathscr{L}(X)$ is 
bounded above by $B(r,s;n):=2\left(\left(1-\frac{r+s}{n}\right)^{-t^2}-1\right)$, where $t=\min(r,s)$.}

\begin{proof} Setting $\mathscr{L}(X)=P_1$ and $\mathscr{L}(Z)=P_2$, let us start with the 
case of $s\leq r$. The density $f_1$ of $P_1$ is given by 
$f(x)=\frac{1}{\pi^{rs} }e^{-tr(x^\ast x)}$ \cite{Goodman}. The density $f_2$ of $P_2$ is 
given in Theorem~$\ref{dist}$. 
Since these are functions of $x^\ast x$ and $z^\ast z$ respectively, the variation distance is equal 
to the variation distance between the distributions of 
$x^\ast x$ and $z^\ast z$. $x^\ast x$ has, in accordance with the definition above, 
the complex Wishart distribution $\mathscr{CW}\left(s,r,\frac{1}{n}I_s\right)$, and hence a density given by
\begin{align}
f(v)&=\frac{\operatorname{det}(v)^{r-s}}{\pi^{\frac{1}{2}s(s-1)}\Gamma(r)\cdots\Gamma(r-s+1)\operatorname{det}(\frac{1}{n}I_s)^r}e^{-\operatorname{tr}((\frac{1}{n}I_s)^{-1}v)}\\
&=\frac{\operatorname{det}(v)^{r-s}}{\pi^{\frac{1}{2}s(s-1)}\Gamma(r)\cdots\Gamma(r-s+1)n^{-sr}}e^{-n\operatorname{tr}(v)}\\
&=\operatorname{det}(v)^{r-s}e^{-n\operatorname{tr}(v)}\pi^{-\frac{1}{2}s(s-1)}\frac{n^{rs}}{\prod_{j=1}^s \Gamma(r-j+1)},
\end{align}
defined 
on the set of $s\times s$ Hermitian, positive-definite matrices. 
The density of $z^\ast z$ we have seen in Proposition~$\ref{cbi}$ to be given by 
\begin{align}
g(v)&= \frac{\mathscr{C}\Gamma_s(n)}{\mathscr{C}\Gamma_s(r)\mathscr{C}\Gamma_s(n-r)}\operatorname{det}(v)^{r-s}\operatorname{det}(I_s-v)^{n-r-s}\\
&= \frac{\operatorname{det}(v)^{r-s}\operatorname{det}(I_s-v)^{n-r-s} \pi^{\frac{1}{2}s(s-1)}\prod_{j=1}^s\Gamma(n-j+1)}{(\pi^{\frac{1}{2}s(s-1)}\prod_{j=1}^s\Gamma(r-j+1))(\pi^{\frac{1}{2}s(s-1)}\prod_{j=1}^s\Gamma(n-r-j+1))}\\
&=\operatorname{det}(v)^{r-s}\operatorname{det}(I_s-v)^{n-r-s}\pi^{-\frac{1}{2}s(s-1)}\prod_{j=1}^s\frac{\Gamma(n-j+1)}{\Gamma(r-j+1)\Gamma(n-r-j+1)},
\end{align}
defined on the set of matrices with $v$ and $I-v$ positive definite. By an alternate characterization 
of total variation (seen in \cite{Diaconis01}), we see that
the total variation distance is given by 

\noindent$\delta_{r,s,n}:=\int \lvert g(v)-f(v)\rvert dv = 2 \int_E \left(\frac{g(v)}{f(v)}-1\right)f(v)dv$, 
where $E$ is the set of $s\times s$ positive definite matrices on which $g(v)>f(v)$. As we will be using it often, let us now 
simplify the expression $\frac{g(v)}{f(v)}$:
\begin{align}
\frac{g(v)}{f(v)}&=\frac{\operatorname{det}(v)^{r-s}\operatorname{det}(I_s-v)^{n-r-s}\pi^{-\frac{1}{2}s(s-1)}\prod_{j=1}^s\frac{\Gamma(n-j+1)}{\Gamma(r-j+1)\Gamma(n-r-j+1)}}{\operatorname{det}(v)^{r-s}e^{-n\operatorname{tr}(v)}\pi^{-\frac{1}{2}s(s-1)}\frac{n^{rs}}{\prod_{j=1}^s \Gamma(r-j+1)}}\\
&=\frac{\operatorname{det}(I_s-v)^{n-r-s}}{e^{-n\operatorname{tr}(v)}n^{rs}}\prod_{j=1}^s\frac{\Gamma(n-j+1)}{\Gamma(n-r-j+1)}\\
&=\operatorname{det}(I_s-v)^{n-r-s}e^{n\operatorname{tr}(v)} \prod_{j=1}^s\frac{\Gamma(n-j+1)}{n^r\Gamma(n-r-j+1)}
\end{align}

Hence, $\delta_{r,s,n}\leq 2\sup_{v\in E} \left(\frac{g(v)}{f(v)}-1\right)$. Set $M_{r,s,n}:=\sup_{v\in E} \left(\frac{g(v)}{f(v)}-1\right)$, 
so that 
$\delta_{r,s,n}~\leq~2~M_{r,s,n}$. Differentiation shows that the maximum of $\left(\frac{g(v)}{f(v)}-1\right)$ is attained uniquely for 
$v=\frac{r+s}{n}I_s$:

Let us first write $\frac{g(v)}{f(v)}=c \operatorname{det}(I_s-v)^{n-r-s} e^{n\operatorname{tr}(v)}$, with 
$c=\prod_{j=1}^s \frac{\Gamma(n-j+1)}{n^r\Gamma(n-r-j+1)}$ independent of $v$. Next, computing the derivative 
with respect to $v$, we will look at first at the partials from the entries off the diagonal, and secondly at the entries of the diagonal.

Case 1: ($i\neq j$) In this case, we first note that $\frac{\partial}{\partial v_{i,j}} e^{n\operatorname{tr}(v)}=0$, 
as the trace depends only on the diagonal. This tells us that 
$\frac{\partial}{\partial v_{i,j}} \frac{g(v)}{f(v)} = c e^{n\operatorname{tr}(v)}\frac{\partial}{\partial v_{i,j}}(\operatorname{det}(I_s-v))^{n-r-s}$. 
Applying the Power Rule and Chain Rule, we see that 

\noindent$\frac{\partial}{\partial v_{i,j}}(\operatorname{det}(I_s-v))^{n-r-s} = (n-r-s)(\operatorname{det}(I_s-v))^{n-r-s-1}\frac{\partial}{\partial v_{i,j}}\operatorname{det}(I_s-v)$. 
Next, we see from 2.1.1 of \cite{cookbook} that 
$\frac{\partial}{\partial v_{i,j}}\operatorname{det}(I_s-v)=\operatorname{det}(I_s-v)\operatorname{tr}\left((I_s-v)^{-1}\frac{\partial}{\partial v_{i,j}}(I_s-v)\right)$. 
Here, $\frac{\partial}{\partial v_{i,j}}(I_s-v)$ is a matrix whose only non-zero entry the $(i,j)$-entry, which is a -1. Hence, we see that that 
$\operatorname{tr}\left((I_s-v)^{-1}\frac{\partial}{\partial v_{i,j}}(I_s-v)\right)$ is $(j,i)$-entry of $-(I_s-v)^{-1}$. Recall that for an invertible matrix $M$, 
$M^{-1}=\frac{1}{\operatorname{det}(M)}\operatorname{adj}(M)=\frac{1}{\operatorname{det}(M)}\operatorname{C}(M)^\intercal$, 
where $\operatorname{adj}(M)$ is the adjoint matrix and $\operatorname{C}(M)$ is the cofactor matrix (3.1.2 and 3.1.4 of \cite{cookbook}). 
Therefore, we see that the $(j,i)$-entry of $-(I_s-v)^{-1}$ is the $(i,j)$-entry of $\frac{-1}{\operatorname{det}(I_s-v)}\operatorname{C}(I_s-v)$. 
We may then conclude that 

\noindent$\frac{\partial}{\partial v_{i,j}} \frac{g(v)}{f(v)}=-c e^{n\operatorname{tr}(v)}(n-r-s)(\operatorname{det}(I_s-v))^{n-r-s-1}\operatorname{C}(I_s-v)_{\{i,j\}}$. 
We can then see that this will only be zero when $\operatorname{C}(I_s-v)_{\{i,j\}}$ is zero, as the first three terms are all positive, and the determinant term is non-zero 
as $g(v)$ is only defined on the set of matrices with both $v$ and $I_s-v$ positive definite. 

Case 2: ($i=j$). We first apply the Product Rule to see that 
$$\frac{\partial}{\partial v_{i,i}} \frac{g(v)}{f(v)} = c\left( e^{n\operatorname{tr}(v)} \left(\frac{\partial}{\partial v_{i,i}} (\operatorname{det}(I_s-v))^{n-r-s}\right) + (\operatorname{det}(I_s-v))^{n-r-s}\left(\frac{\partial}{\partial v_{i,i}}e^{n\operatorname{tr}(v)} \right)\right).$$
We have already computed the partial derivative of the power of the determinant. In the second term, we see from a quick application of the chain rule that 
$\frac{\partial}{\partial v_{i,i}}e^{n\operatorname{tr}(v)}=n e^{n\operatorname{tr}(v)}$. We may then conclude that 
$$\frac{\partial}{\partial v_{i,i}} \frac{g(v)}{f(v)} = c e^{n\operatorname{tr}(v)}(\operatorname{det}(I_s-v))^{n-r-s-1}\left( -(n-r-s)\operatorname{C}(I_s-v)_{\{i,i\}} + n\operatorname{det}(I_s-v)\right).$$
This will be zero only when $n\operatorname{det}(I_s-v)=(n-r-s)\operatorname{C}(I_s-v)_{\{i,i\}}$.

We have now classified the critical point of $\frac{g(v)}{f(v)}$ to be any matrix $v$ for which the $(i,j)$ cofactor of $(I_s-v)$ is given by the equation 
$\frac{n}{n-r-s}\operatorname{det}(I_s-v)\delta_{i,j}$, where $\delta_{i,j}$ is the Kronecker delta. We have already seen how to express 
the inverse of a matrix in terms of the determinant and the cofactor matrix, so since we know all of the cofactors of $I_s-v$, we know the inverse of $I_s-v$. 
Specifically, $\begin{bmatrix} (I_s-v)^{-1}_{i,j}\end{bmatrix} = \frac{1}{\operatorname{det}(I_s-v)}\begin{bmatrix} \frac{n}{n-r-s}\operatorname{det}(I_s-v) \delta_{j,i} \end{bmatrix}$. 
Observe that the matrix on the right-hand side of the equation is simply the identity matrix scaled by $\frac{n}{n-r-s}$. Inverting both sides produces 
$I_s-v=\frac{n-r-s}{n}I_s$, so we see that $v=\left(1-\frac{n-r-s}{n}\right)I_s = \frac{r+s}{n} I_s$.

%

Now that we see that this is the only critical point, we will show that it produces a maximum. All of the following properties are given 
in \cite{convex}. First, recall that a critical point of a concave function must be a maximum. Second, note that if 
$\phi(x)$ is convex, then so too are $\alpha \phi(x)$, $\phi(x+t)$, and $\phi(A x)$ for any 
$\alpha>0,t\in\mathbb{R}^M$, and $M\times M$ matrix $A$ and $-\phi(x)$ is concave. Fourth, we know that the sum, product, and 
composition of convex functions are convex. From this last property, we see that a concave function pre-composed with a convex function 
is concave and the product of a convex function and a concave function is concave. Using these properties, it is easy to see that the 
trace of a matrix is convex, as it is the sum of the projections to the the diagonal elements. Likewise, from the fact that 
$\frac{d^2}{d^2 x}e^{\alpha x}=\alpha^2e^{\alpha x}$, we know that $e^{\alpha x}$ is convex, showing that 
$c e^{n\operatorname{tr}(v)}$ is convex. Now, we need only show that $\operatorname{det}(v)$ is concave to show the concavity of 
$\frac{f(v)}{g(v)}-1$, which is given as Example 3.39 of \cite{convex}.

Hence, we know that
\begin{align}
M_{r,s,n}+1&=\frac{g((r+s)n^{-1}I_s)}{f((r+s)n^{-1}I_s)}\\
&=\operatorname{det}(I_s-(r+s)n^{-1}I_s)^{n-r-s}e^{n\operatorname{tr}((r+s)n^{-1}I_s)} \prod_{j=1}^s\frac{\Gamma(n-j+1)}{n^r\Gamma(n-r-j+1)}\\
&=\operatorname{det}\left(\left(1-\frac{r+s}{n}\right)I_s\right)^{n-r-s}e^{ns(\frac{r+s}{n})} \prod_{j=1}^s\frac{\Gamma(n-j+1)}{n^r\Gamma(n-r-j+1)}\\
&=\left(1-\frac{r+s}{n}\right)^{s(n-r-s)}e^{s(r+s)} \prod_{j=1}^s\frac{\Gamma(n-j+1)}{n^r\Gamma(n-r-j+1)}\\
&= \prod_{j=1}^s\left(\frac{\Gamma(n-j+1)}{n^r\Gamma(n-r-j+1)}\left(1-\frac{r+s}{n}\right)^{n-r-s}e^{r+s}\right)
\end{align}
We would now like to write this in terms of logarithms. To do this, we first observe that
\begin{align}
-n\int_0^{t}\!\ln(1-x)\,\mathrm{d}x&=-n\left((x-1)\ln(1-x)-x\right)^{x=t}_{x=0}\\
&=-n((t-1)\ln(1-t)-t)\\
&=nt+(n-nt)\ln(1-t).\end{align}
Setting $t=\frac{r+s}{n}$ gives us 
$\displaystyle -n\int_0^{\frac{r+s}{n}}\!\ln(1-x)\,\mathrm{d}x = (r+s)+(n-r-s)\ln\left(1-\frac{r+s}{n}\right)$. 
Next, set
$$A_j=\ln\left(\frac{\Gamma(n-j+1)}{n^r\Gamma(n-r-j+1)}\right)-n\int_0^{(r+s)/n} \ln(1-x)\mathrm{d}x+\ln\left(1-\frac{r+s}{n}\right),$$ 
we can write $M_{r,s,n}+1=\prod_{j=1}^s e^{A_j}$. Now let us write $A_j$ in a more 
pliable form by noting that
\begin{align}
\ln\left(\frac{\Gamma(n-j+1)}{n^r\Gamma(n-r-j+1)}\right) &= \ln(\Gamma(n-j+1))-\ln(\Gamma(n-r-j+1))-\ln(n^r)\\
&= \left(\sum_{i=1}^{n-j} \ln(i)\right)-\left(\sum_{i=1}^{n-r-j} 
\ln(i)\right)-\left(\sum_{i=1}^{r} \ln(n)\right)\\
&=\left(\sum_{i=n-j-r+1}^{n-j} \ln(i)\right)-\left(\sum_{i=1}^{r} \ln(n)\right)\\
&=\left(\sum_{k=1}^{r} \ln(n-j-k+1)\right)-\left(\sum_{i=1}^{r} \ln(n)\right)\\
&=\sum_{i=1}^r \ln\left(\frac{n-j-i+1}{n}\right)\\
&=\sum_{i=1}^r \ln\left(1-\frac{j+i-1}{n}\right).
\end{align}

In line 3.10, we have used the fact that for $x\in\mathbb{N}$, $\Gamma(x)=\prod_{i=1}^{x-1}i.$ In line 3.12, we 
introduce the change of indices $k=(n-j+1)-i$, which ranges from 1 when $i=n-j$ to $r$ when $i=n-j-r+1$.

This lets us simplify $A_j$ into the form:
$$A_j=\left(\sum_{i=1}^r \ln\left(1-\frac{j+i-1}{n}\right)\right)-n\int_0^{(r+s)/n}\ln(1-x)\mathrm{d}x+\ln\left(1-\frac{r+s}{n}\right).$$

Writing $A_j$ in this way as sum of three quantities, it is easy to see that $A_j\leq A_1$ for all $j=1,2,\dots,s$: Only the first depends on $j$, 
and as $j$ increases, $1-\frac{j+i-1}{n}$ is decreasing, so that $\ln\left(1-\frac{j+i-1}{n}\right)$ is decreasing. This allows us to to bound 
$M_{r,s,n}~+~1~\leq~\prod_{j=1}^s~e^{A_1}~=~e^{sA_1}$. Next, we claim that $-\ln(1-x)$ is an increasing convex function on $[0,1)$. To see this, 
first, we note that the first derivative, $\frac{1}{1-x}$, is strictly positive for all $x\in[0,1)$, while 
the second derivative, $\frac{-1}{(1-x)^2}$, is strictly negative. Next, recall that the graph of a convex function $h(x)$ on any interval $[a,b]$ 
lies below the graph of the secant line from $(a,f(a))$ to $(b,f(b))$. Let $l_{[a,b]}(x)=\frac{\ln(1-a)-\ln(1-b)}{b-a}(x-a)-\ln(1-a)$ 
be the function whose graph is the secant line of $-\ln(1-x)$ from $(a,-\ln(1-a))$ to $(b,-\ln(1-b))$. We then have the inequality that 
$0\leq-\ln(1-x)\leq l_{[a,b]}(x)$ for any 
$0\leq a < x < b <1$. In particular, monotonicity of integration tells us then that 
$0\leq - \int_a^b \log(1-x)\mathrm{d}x \leq \int_a^b l_{[a,b]}(x)\mathrm{d}x = \frac{b-a}{2}(-\ln(1-b)-\ln(1-a)).$ 

Setting $a=\frac{i-1}{n}$ and $b=\frac{i}{n}$, we then have that 

\noindent$-n\int_{(i-1)/n}^{i/n}\log(1-x)\mathrm{d}x \leq \frac{1}{2}\left(-\log(1-\frac{i}{n})-\log(1-\frac{i-1}{n})\right)$. 
Which we can write as 

\noindent$\frac{1}{2}\log(1-\frac{i}{n})\leq n\int_{(i-1)/n}^{i/n}\log(1-x)\mathrm{d}x - \frac{1}{2}\log(1-\frac{i-1}{n})$. We now have the tools 
to bound $A_1$ nicely:
\begin{align}
A_1&=\left(\sum_{i=1}^r \ln\left(1-\frac{i}{n}\right)\right)-n\int_0^{(r+s)/n}\ln(1-x)\mathrm{d}x+\ln\left(1-\frac{r+s}{n}\right)\\
&=\left(2\sum_{i=1}^r \frac{1}{2}\ln\left(1-\frac{i}{n}\right)\right)-n\int_0^{(r+s)/n}\ln(1-x)\mathrm{d}x+\ln\left(1-\frac{r+s}{n}\right)\\
&\leq \left(\sum_{i=1}^r \frac{1}{2} \ln\left(1-\frac{i}{n}\right)\right) + \left(\sum_{i=1}^r n\int_{(i-1)/n}^{i/n}\ln(1-x)\mathrm{d}x - \frac{1}{2}\ln\left(1-\frac{i-1}{n}\right)\right)\\
&\;\;\;\;\;\;-n\int_0^{(r+s)/n}\ln(1-x)\mathrm{d}x+\ln\left(1-\frac{r+s}{n}\right)\\
&= \left(\sum_{i=1}^r \frac{1}{2} \ln\left(1-\frac{i}{n}\right)- \frac{1}{2}\ln\left(1-\frac{i-1}{n}\right)\right) + \left(\sum_{i=1}^r n\int_{(i-1)/n}^{i/n}\ln(1-x)\mathrm{d}x\right)\\
&\;\;\;\;\;\;-n\int_0^{(r+s)/n}\ln(1-x)\mathrm{d}x+\ln\left(1-\frac{r+s}{n}\right)\end{align}\begin{align}
&=\frac{1}{2}\ln\left(1-\frac{r}{n}\right)-n\int_{r/n}^{(r+s)/n}\ln(1-x)\mathrm{d}x+\ln\left(1-\frac{r+s}{n}\right)\\
&\leq \frac{1}{2}\ln\left(1-\frac{r}{n}\right) -\frac{s+1}{2}\left(\ln\left(1-\frac{r}{n}\right)+\ln\left(1-\frac{r+s}{n}\right)\right)+\ln\left(1-\frac{r+s}{n}\right)\\
&=-\frac{s}{2}\ln\left(1-\frac{r}{n}\right)-\frac{s-1}{2}\ln\left(1-\frac{r+s}{n}\right)\\
&\leq -\left(\frac{s}{2}+\frac{s-1}{2}\right)\ln\left(1-\frac{r+s}{n}\right)\\
&\leq -s\ln\left(1-\frac{r+s}{n}\right)
\end{align}

In lines 75-76, we have applied the bound we obtained form the convexity argument to 
one of the sums of $\frac{1}{2}\ln\left(1-\frac{i}{n}\right)$. In lines 77-78, we combine the sums of the logarithms, 
in preparation to evaluate the single telescoping sum in lines 79. In lines 80, we use again the convexity argument 
to bound the integral by the sum of two logarithms before collecting terms in 81. In line 82, we use the fact that 
$-\ln(1-x)$ is increasing. Finally, in line 83, since $-\ln\left(1-\frac{r+s}{n}\right)>0$, we use the 
slightly simpler upper bound for $\left(s-\frac{1}{2}\right)$.

We then have that $M_{r,s,n}+1\leq e^{-s^2 \ln(1-(r+s)/n)} = \left(1-\frac{r+s}{n}\right)^{-s^2}$. 
Hence, we have that $\delta_{r,s,n}\leq 2 \left( \left(1-\frac{r+s}{n}\right)^{-s^2}-1\right)$. To finish the 
proof, in the case that $r\leq s$, we repeat these arguments with their roles reversed. This 
brings us to the promised form:

$\delta_{r,s,n}\leq 2 \left( \left(1-\frac{r+s}{n}\right)^{-(\min(r,s)^2)}-1\right)=B(r,s;n).$
\end{proof}
\end{appendices}

\end{document}